\newtheorem{lemma}{Lemma}
\newtheorem{remark}{Remark}
\newtheorem{theorem}{Theorem}
\title{\LARGE \bf
Minimum time control of heterodirectional linear coupled hyperbolic PDEs
}
\author{Jean Auriol$^{1}$ and Florent Di Meglio$^{2}$
\thanks{$^{1}$Jean Auriol is with MINES ParisTech, PSL Research University, CAS - Centre automatique et syst\`emes, 60 bd St Michel 75006 Paris, France. 
        {\tt\small jean.auriol@mines-paristech.fr}}%
\thanks{$^{2}$Florent Di Meglio is with MINES ParisTech, PSL Research University, CAS - Centre automatique et syst\`emes, 60 bd St Michel 75006 Paris, France. 
        {\tt\small florent.di\_meglio@mines-paristech.fr}}%
}
\begin{document}

\maketitle
\thispagestyle{empty}
\pagestyle{empty}

\begin{abstract}

We solve the problem of stabilizing a general class of linear first-order hyperbolic systems. Considered systems feature an arbitrary number of coupled transport PDEs convecting in either direction. Using the backstepping approach, we derive a full-state feedback law and a boundary observer enabling stabilization by output feedback. Unlike previous results, finite-time convergence to zero is achieved in the theoretical lower bound for control time. 

\end{abstract}

\section{INTRODUCTION}
 
This article solves the problem of boundary stabilization of a general class of coupled heterodirectional linear first-order hyperbolic systems of Partial Differential Equations (PDEs) in minimum time, with arbitrary numbers $m$ and $n$ of PDEs in each direction and with actuation applied on only one boundary. 

First-order hyperbolic PDEs are predominant in modeling of traffic flow \cite{amin2008stability}, heat exchanger \cite{xu2002exponential}, open channel flow~\cite{coron1999lyapunov},~\cite{de2003boundary} or multiphase flow~\cite{di2011dynamics, djordjevic2010boundary, dudret2012stability}.
Research on controllability and stability of hyperbolic systems have first focused on explicit computation of the solution along the characteristic curves in the framework of the $C^1$ norm~\cite{greenberg1984effect},~\cite{li1994global},~\cite{qin1985global}. Later, Control Lyapunov Functions methods emerged, enabling the design of dissipative boundary conditions for nonlinear hyperbolic systems \cite{coron2009control, coron2008dissipative}. In~\cite{coron2013local} control laws for a system of two coupled nonlinear PDEs are derived, whereas in \cite{buenaventura2012dynamic, coron2008dissipative, prieur2012iss, prieur2008robust, santos2008boundary} sufficient conditions for exponential stability are given for various classes of quasilinear first-order hyperbolic system. These conditions typically impose restrictions on the magnitude of the coupling coefficients.

In \cite{vazquez2011backstepping} a backstepping transformation is used to design a single boundary output-feedback controller. This control law yields $H^2$ exponential stability of closed loop 2-state heterodirectional linear and quasilinear hyperbolic system for arbitrary large coupling coefficients. A similar approach is used in \cite{di2013stabilization} to design output feedback laws for a system of coupled first-order hyperbolic linear PDEs with $m=1$ controlled negative velocity and $n$ positive ones. The generalization of this result to an arbitrary number $m$ of controlled negative velocities is presented in \cite{hu2015control}. There, the proposed control law yields finite-time convergence to zero, but the convergence time is larger than the minimum control time, derived in \cite{li2010strong, woittennek2009flatness}. This is due to the presence of non-local coupling terms in the targeted closed-loop behavior.

The main contribution of this paper is a minimum time stabilizing controller. More precisely, a proposed boundary feedback law ensures finite-time convergence of all states to zero in minimum-time. This minimum-time, defined in~\cite{li2010strong},~\cite{woittennek2009flatness} is the sum of the two largest time of transport in each direction.  

Our approach is the following. Using a backstepping approach (with a Volterra transformation) the system is mapped to a \textit{target} system with desirable stability properties. This target system is a copy of the original dynamics with a modified in-domain coupling structure. More precisely, the target system is designed as an exponentially stable cascade.  A full-state feedback law guaranteeing exponential stability of the zero equilibrium in the $\mathcal{L}^2$-norm is then designed. This full-state feedback law requires full distributed measurements.  For this reason we derive a boundary observer relying on measurements of the states at a single boundary (the anti-collocated one). Similarly to the control design, the observer error dynamics are mapped to a target system using a Volterra transformation. Along with the full-state feedback law, this yields an output feedback controller amenable to implementation. 

The main technical difficulty of this paper is to prove well-posedness of the Volterra transformation. Interestingly, the transformation kernels satisfy a system of equations with a cascade structure akin to the target system one. This structure enables a recursive proof of existence of the transformation kernels.

The paper is organized as follows. In Section II we introduce the model equations and the notations. In Section III we present the stabilization result: the target system and its properties are presented in Section III-A. In Section III-B we derive the backstepping transformation. Section IV contains the main technical difficulty of this paper which is the proof of well-posedness of the kernel equations. In Section IV-A we transform the kernel equations into an integral equation using the method of characteristics. In Section IV-B we solve the integral equations using the method of successive approximations. In Section V we present the control feedback law and its properties. In Section VI we present the uncollocated observer design. In Section VII we give some simulation results. Finally in Section VIII we give some concluding remarks

\section{Problem Description}
\subsection{System under consideration}

We consider the following general linear hyperbolic system 

\begin{align}
u_t(t,x)+\Lambda^+u_x(t,x)&=\Sigma^{++}u(t,x)+\Sigma^{+-}v(t,x) \label{u_pde}\\
v_t(t,x)-\Lambda^-v_x(t,x)&=\Sigma^{-+}u(t,x)+\Sigma^{--}v(t,x) \label{v_pde}
\end{align}
with the following linear boundary conditions 

\begin{align}
u(t,0)=Q_0v(t,0), \quad v(t,1)=R_1u(t,1)+U(t) \label{IC}
\end{align}
where

\begin{align}
u=(u_1 \dots u_n)^T, \quad v=(v_1 \dots v_m)^T 
\end{align}

\begin{align}
\Lambda^+=\begin{pmatrix}\lambda_1 & &0\\ &\ddots& \\ 0 & &\lambda_n \end{pmatrix}, \quad \Lambda^-=\begin{pmatrix}\mu_1 & &0\\ &\ddots& \\ 0 & &\mu_m \end{pmatrix}
\end{align}
with constant speeds :

\begin{align}
-\mu_m< \dots < -\mu_1< 0 <\lambda_1 \leq \dots \leq \lambda_n
\end{align}
and constant coupling matrices as well as the feedback control input 

\begin{align}
\Sigma^{++}&=\{\sigma^{++}_{ij}\}_{1\leq i \leq n, 1\leq j \leq n} \quad \Sigma^{+-}=\{\sigma^{+-}_{ij}\}_{1\leq i \leq n, 1\leq j \leq m} \\
\Sigma^{-+}&=\{\sigma^{-+}_{ij}\}_{1\leq i \leq m, 1\leq j \leq n} \quad \Sigma^{--}=\{\sigma^{--}_{ij}\}_{1\leq i \leq m, 1\leq j \leq m} \\
Q_0&=\{q_{ij}\}_{1\leq i \leq n, 1\leq j \leq m} \quad R_1=\{\rho_{ij}\}_{1\leq i \leq m, 1\leq j \leq n}
\end{align}

\subsection{Control problem}

The goal is to design feedback control inputs $U(t)=(U_1(t), \dots, U_m(t))^T$ such that the zero equilibrium is reached in minimum time $t=t_F$, where 
\begin{align}
t_F=\dfrac{1}{\mu_1}+\dfrac{1}{\lambda_1} \label{t_F}
\end{align}

\section{Control design}
The control design is based on the backstepping approach: using a Volterra transformation, we map the system \eqref{u_pde}-\eqref{IC} to a target system with desirable properties of stability.

\subsection{Target system}
\subsubsection{Target system design} 
We map the system \eqref{u_pde}-\eqref{IC} to the following system

\begin{multline}
\alpha_t(t,x)+\Lambda^+\alpha_x(t,x) = \Sigma^{++}\alpha(t,x)+\Sigma^{+-}\beta(t,x) \\
+\int^x_0 C^+(x,\xi)\alpha(t,\xi) d\xi + \int^x_0 C^-(x,\xi)\beta(t,\xi) d\xi
\label{a_pde}
\end{multline}

\begin{align}
\beta_t(t,x)-\Lambda^-\beta_x(t,x)=\Omega(x)\beta(t,x) 
\label{b_pde}
\end{align}
with the following boundary conditions

\begin{align}
\alpha(t,0)=Q_0\beta(t,0) \quad \beta(t,1)=0
\label{IC_ab}
\end{align}
where $C^+$ and $C^-$ are $L^{\infty}$ matrix functions on the domain 

\begin{align}
\mathcal{T}=\{0 \leq \xi \leq x \leq 1 \}
\end{align}
while $\Omega \in L^{\infty}(0,1)$ is an upper triangular matrix with the following structure
\begin{align}
\Omega(x)= \begin{pmatrix} \omega_{1,1}(x) & \omega_{1,2}(x) & \dots & \omega_{1,m}(x) \\ 0 & \ddots & \ddots & \vdots \\  \vdots & \ddots  & \omega_{m-1,m-1}(x) &\omega_{m-1,m}(x)  \\ 0 &\dots & 0 & \omega_{m,m}(x) \end{pmatrix}
\end{align}
\\
This system is designed as a copy of the original dynamics, from which the coupling terms of \eqref{v_pde} are removed. The integral coupling appearing in \eqref{a_pde} are added for the control design but don't have any incidence on the stability of the target system.
\\
\begin{lemma} The zero equilibrium of \eqref{a_pde},\eqref{b_pde} with boundary conditions \eqref{IC_ab} and initial conditions $(\alpha^0, \beta^0) \in  \mathcal{L}^2([0,1])$ is exponentially stable in the $\mathcal{L}^2$ sense
\end{lemma}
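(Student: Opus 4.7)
My plan is to exploit the cascade structure of the target system: the $\beta$-subsystem is autonomous (independent of $\alpha$), and the upper-triangular form of $\Omega$ together with the homogeneous boundary condition $\beta(t,1)=0$ turns $\beta$ into a cascade of scalar transport equations. I would first establish convergence of $\beta$ to zero, then deduce exponential decay of $\alpha$, treating $\beta$ as a vanishing disturbance that enters both in-domain (via $\Sigma^{+-}\beta$ and $\int_0^x C^-\beta\, d\xi$) and through the boundary identity $\alpha(t,0)=Q_0\beta(t,0)$.

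For the first step, because $\Omega$ is upper triangular, the last component $\beta_m$ satisfies the scalar equation $\partial_t \beta_m - \mu_m \partial_x \beta_m = \omega_{m,m}(x)\beta_m$ with $\beta_m(t,1)=0$. The method of characteristics yields $\beta_m(t,\cdot)\equiv 0$ for $t\geq 1/\mu_m$. Reverse induction on $i$ then works: once $\beta_{i+1},\ldots,\beta_m$ vanish identically, the equation for $\beta_i$ reduces to the same kind of homogeneous scalar transport, and $\beta_i$ vanishes after an additional delay $1/\mu_i$. Consequently $\beta(t,\cdot)\equiv 0$ for $t\geq T_\beta := \sum_{j=1}^m 1/\mu_j$, which in particular yields exponential $\mathcal{L}^2$-decay of $\beta$ and of its trace at $x=0$.

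For the second step, I introduce the weighted functional $V_\alpha(t) = \int_0^1 e^{-bx}\, \alpha(t,x)^T \alpha(t,x)\, dx$ with $b>0$ to be chosen large. Integration by parts in the transport term, the boundary identity $\alpha(t,0)=Q_0\beta(t,0)$, and Cauchy--Schwarz on the Volterra integrals yield an inequality of the form
\begin{equation*}
\dot V_\alpha \leq -(b\lambda_1-C)\, V_\alpha + C_1\, |\beta(t,0)|^2 + C_2\, \|\beta(t,\cdot)\|_{\mathcal{L}^2}^2,
\end{equation*}
where $C$ collects $\|\Sigma^{++}\|_\infty$ and $\|C^+\|_\infty$. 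Choosing $b$ so that $b\lambda_1 - C > 0$ and invoking the exponential decay of $\beta$ obtained in Step~1, Gronwall's inequality gives exponential $\mathcal{L}^2$-decay of $\alpha$.

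The main subtle point is that the inequality for $V_\alpha$ features the pointwise trace $|\beta(t,0)|^2$ coming from the inhomogeneous boundary condition at $x=0$, and this cannot be controlled by the $\mathcal{L}^2$-norm of $\beta$ alone. Step~1 dispatches this by giving explicit pointwise control along characteristics; alternatively, one may work with a combined Lyapunov functional $V = V_\alpha + \kappa V_\beta$ where $V_\beta = \sum_{i=1}^m K_i \int_0^1 e^{a_i x} \beta_i^2\,dx$ with weights chosen both to dominate the upper-triangular couplings of $\Omega$ and to generate, after integration by parts, negative boundary terms $-\mu_i \beta_i(t,0)^2$ strong enough to absorb the stray $\beta(t,0)$ contribution from $V_\alpha$. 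Beyond this bookkeeping, the argument reduces to a standard weighted $\mathcal{L}^2$ energy estimate, and the Volterra kernels $C^\pm$ introduce no essential difficulty.
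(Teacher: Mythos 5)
Your argument is correct, but it takes a genuinely different route from the paper. The paper's proof uses a single combined Lyapunov functional $V(t)=\int_0^1\bigl(e^{-\delta x}\alpha^T(\Lambda^+)^{-1}\alpha + l\,e^{\delta x}\beta^T(\Lambda^-)^{-1}\beta\bigr)dx$: after integration by parts the boundary contribution at $x=0$ collapses to $\beta(t,0)^T(Q_0^TQ_0-lI)\beta(t,0)$, which is made negative by taking $l>m\|q\|$, and then $\delta$ is taken large enough that the transport-induced damping dominates all in-domain couplings (including $\Omega$ and the Volterra kernels $C^\pm$). Notably, that proof never uses the upper-triangular structure of $\Omega$ --- any bounded $\Omega$ would do --- and it delivers exponential stability directly, with the finite-time property deferred to Lemma 2. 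Your two-step cascade argument instead proves finite-time extinction of $\beta$ first (which does rely on the triangularity of $\Omega$ and is essentially half of Lemma 2), and then runs an ISS-type estimate with Gronwall for $\alpha$ alone. What your route buys is a stronger intermediate statement and no need to tune two coupled parameters $(l,\delta)$ simultaneously; what it costs is exactly the trace issue you flag: for $\mathcal{L}^2$ data the quantity $|\beta(t,0)|^2$ is only defined as an $L^2_{\mathrm{loc}}$-in-time trace (hidden regularity via the characteristics representation), not pointwise, so the Gronwall step should be phrased with $\int_0^{T_\beta}|\beta(s,0)|^2\,ds$ controlled by $\|\beta^0\|_{\mathcal{L}^2}^2$ rather than with pointwise bounds. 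Your closing remark --- that one could instead fold everything into a combined functional $V_\alpha+\kappa V_\beta$ with weights generating negative boundary terms at $x=0$ --- is precisely the paper's mechanism, so you have in effect identified both proofs.
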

\begin{proof}
Consider the following candidate Lyapunov functional :
\begin{align}
V(t) = \int^{1}_{0}\left(e^{-\delta x} \sum^{n}_{i=1} \dfrac{ \alpha_i(t,x)^2}{\lambda_i} + le^{\delta x} \sum^{n}_{i=1} \dfrac{ \beta_i(t,x)^2}{\mu_i}\right)dx 
\end{align}
where  $l>0$ and $\delta>0$  are parameters to be determined. One should notice that $\sqrt{V}$ is equivalent to the $\mathcal{L}^2$ norm. After differentiating $V$ with respect to time and integrating by part we get :
\begin{multline}
\dot{V}(t)=[-e^{-\delta x}\alpha(t,x)^T\alpha(t,x) + le^{\delta  x} \beta(t,x)^T \beta(t,x)]^1_0 \\
- \int^{1}_{0}\delta e^{-\delta x}\alpha(t,x)^T \alpha(t,x) dx - \int^{1}_0 l \delta  e^{\delta  x}\beta(t,x)^T \beta(t,x) dx \\
+2\int^1_0e^{-\delta x}\alpha(t,x)^T (\Lambda^+)^{-1}\Sigma^{++}\alpha(x,t)dx \\
+2\int^1_0e^{-\delta x}\alpha(t,x)^T(\Lambda^+)^{-1}\Sigma^{+-}\beta(t,x)dx \\
+2\int^1_0\int^x_0 e^{-\delta x}\alpha(t,x)^T(\Lambda^+)^{-1}C^+(x, \xi)\alpha(t,\xi)d\xi dx \\
+2\int^1_0\int^x_0 e^{-\delta x}\alpha(t,x)^T(\Lambda^+)^{-1}C^{-}(x,\xi)\beta(t,\xi) d\xi dx \\
+2l\int^1_0 e^{\delta  x} \beta(t,x)^T(\Lambda^-)^{-1}\Omega(x)\beta(t,x) dx
\end{multline}
Let $M>0$, $||q||$ and $\epsilon >0$ be such that
\begin{align}
&\forall i=1,\dots, n\quad  \forall j=1, \dots, n \quad \forall k=1, \dots, m  \nonumber \\
&\forall l=1, \dots, m \quad \forall x \in [0,1]  \quad \Sigma^{++}_{ij}, \Sigma^{+-}_{ik}, C^+_{ij}, \nonumber  \\
& \quad \quad \quad \quad C^{-}_{ik}, \Omega_{kl}(x)< M \nonumber \\
&\forall i=1, \dots, n \quad  \lambda_i > \epsilon \quad \text{and} \quad \mu_i > \epsilon \nonumber\\
& ||q||= \displaystyle \max_{i=1,\dots,m, j=1,\dots,m} q_{ij} \nonumber
\end{align}
Using Young's and Cauchy-Schwarz inequalities and the boundary conditions yields
\begin{multline}
\dot{V}(t) \leq \beta(t,0)^T(Q_0^TQ_0-lI_{m\times m})\beta(t,0)\\
-\int^1_0 e^{-\delta x}\alpha(t,x)^TP\alpha(t,x)dx\\
-\int^1_0 le^{\delta  x}\beta(t,x)^TQ(x)\beta(t,x)dx
\end{multline}
with $P=\left((\delta-\dfrac{2mM}{\epsilon}-\dfrac{nM}{\epsilon}-\dfrac{Mn}{\delta\epsilon})I_{n\times n}-2(\Lambda^+)^{-1}\Sigma^{++}\right)$ and $Q(x)=\left(\delta -\dfrac{pnM}{l\epsilon}e^{-\delta  x}-\dfrac{Mn}{l\delta \epsilon}e^{-\delta  x}\right)I_{m \times m} - 2(\Lambda^-)^{-1}\Omega(x)$.\\
Choosing $l$ such that $l>m||q||$ ensures that $\beta(t,0)^T(Q_0^TQ_0-lI_{m\times m})\beta(t,0) \leq s\beta(t,0)^T\beta(t,0)$ for some $s < 0$. Taking $\delta$ large enough ensures that $Q(x)$ and $P$ are positive definite for all $x \in [0,1]$. 
This concludes the proof
\\
\end{proof}
Besides, the following lemma assesses the finite-time stability of the target system.
\begin{lemma}
The system \eqref{a_pde}, \eqref{b_pde} reaches its zero equilibrium in finite-time $t_F=\dfrac{1}{\mu_1}+\dfrac{1}{\lambda_1}$
\end{lemma}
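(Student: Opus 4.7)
The plan is to treat $\beta$ and $\alpha$ in cascade. I would first exploit the upper-triangular structure of $\Omega$ to show, by backward induction, that $\beta(t,\cdot)\equiv 0$ for $t\ge 1/\mu_1$. Once $\beta$ vanishes, the left boundary condition $\alpha(t,0)=Q_0\beta(t,0)$ becomes homogeneous, and the resulting autonomous $\alpha$-subsystem empties out after an additional time $1/\lambda_1$, yielding finite-time extinction at $t_F$.

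For the $\beta$-subsystem I would run a backward induction on $i=m,m-1,\dots,1$ proving the sharper pointwise claim that $\beta_i(t,x)=0$ whenever $t>(1-x)/\mu_i$. The base case $\beta_m$ is decoupled, since $\omega_{m,j}=0$ for $j<m$, and the boundary value $\beta_m(t,1)=0$ simply propagates leftward at speed $\mu_m$ along characteristics. For the inductive step, along the characteristic of $\beta_i$ emanating from $(t_0,1)$ with $t_0>0$, a short computation using $\mu_j>\mu_i$ shows that each intermediate point $(s,1-\mu_i(s-t_0))$ lies in the region $\{s>(1-x)/\mu_j\}$ where the inductive hypothesis guarantees $\beta_j\equiv 0$ for every $j>i$. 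The ODE for $\beta_i$ along its characteristic therefore collapses to the homogeneous scalar equation $d\beta_i/ds=\omega_{i,i}\beta_i$ with zero initial data, so $\beta_i$ stays at zero along the characteristic. Specialising to $i=1$, $x=0$ gives $\beta(t,\cdot)\equiv 0$ for $t\ge 1/\mu_1$.

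For the $\alpha$-subsystem, the shifted field $\tilde\alpha(t,x):=\alpha(t+1/\mu_1,x)$ satisfies an autonomous PDE with integral coupling and homogeneous left boundary condition. I would then show $\tilde\alpha\equiv 0$ on the cone $\mathcal{C}:=\{(t,x):t\ge x/\lambda_1\}$. Integrating along the characteristic of $\tilde\alpha_i$ from $(t-x/\lambda_i,0)$ to $(t,x)$ expresses $\tilde\alpha_i(t,x)$ as a double integral of $\tilde\alpha$ over pairs $(s,\xi)$ with $s\in[t-x/\lambda_i,t]$ and $\xi\le y_i(s):=x-\lambda_i(t-s)$; the ordering $\lambda_i\ge\lambda_1$ combined with $t\ge x/\lambda_1$ yields $s\ge \xi/\lambda_1$ at every such point, so $\mathcal{C}$ is invariant under this representation. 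Setting $f(t):=\sup_{i,\,0\le x\le\min(\lambda_1 t,1)}|\tilde\alpha_i(t,x)|$ and bounding the coupling kernels by a constant, I would derive $f(t)\le C\int_0^t f(s)\,ds$ with $f(0)=0$, and Gronwall's inequality would give $f\equiv 0$. Taking $x=1$ then yields $\alpha(t,\cdot)\equiv 0$ for $t\ge t_F$.

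The main obstacle will be the non-local coupling $\int_0^x C^+(x,\xi)\alpha(t,\xi)\,d\xi$: a direct propagation-of-zeros argument along characteristics fails because $\tilde\alpha_i(t,x)$ depends on an entire interval of $x$-values, not just on a single characteristic. What saves the argument is the Volterra (strictly left-looking) structure of this integral, which keeps $\mathcal{C}$ invariant under the integral operator and permits the Gronwall closure.
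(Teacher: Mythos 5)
Your proof is correct, and it supplies precisely the argument that the paper itself omits: the paper's ``proof'' of this lemma is a one-line citation of \cite[Lemma 3.1]{hu2015control}, so your write-up is the self-contained version of what that citation stands in for. Both routes rest on the same cascade idea, but your proposal makes explicit the two structural facts that are actually responsible for the \emph{minimum} time $1/\mu_1+1/\lambda_1$ (as opposed to the larger time $1/\lambda_1+\sum_i 1/\mu_i$ obtained in \cite{hu2015control}): first, the upper-triangularity of $\Omega$ combined with the speed ordering $\mu_j>\mu_i$ for $j>i$ guarantees that every point of a $\beta_i$-characteristic issued from $(t_0,1)$, $t_0>0$, already lies in the extinction region of the faster components $\beta_j$, so each $\beta_i$ dies in time $(1-x)/\mu_i$ rather than waiting for a full sequential cascade; second, the strictly left-looking (Volterra) structure of $\int_0^x C^+(x,\xi)\alpha(t,\xi)\,d\xi$ keeps the cone $\{t\ge x/\lambda_1\}$ invariant, which is what allows the Gronwall closure despite the non-locality --- this is exactly the point at which a naive propagation-of-zeros argument would break, and you identify it correctly. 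Two minor points to tidy up: the inductive claim $\beta_i(t,x)=0$ for $t>(1-x)/\mu_i$ gives $\beta(t,\cdot)\equiv 0$ only for $t>1/\mu_1$ (the closed-time statement follows by continuity or is irrelevant in $L^2$), and the supremum $f(t)$ in the Gronwall step is only guaranteed finite for bounded (e.g.\ broad or $C^0$) solutions, so for $\mathcal{L}^2$ initial data one should either run the same estimate on $\|\tilde\alpha(t,\cdot)\|_{L^2}$ over the cone section or conclude by density and continuity of the solution map. Neither issue affects the substance of the argument.
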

\begin{proof}
The proof of this lemma is straightforward using the proof of \cite[Lemma 3.1]{hu2015control} \\
\end{proof}

\subsubsection{Volterra Transformation}
In order to map the original system \eqref{u_pde}-\eqref{IC} to the target system \eqref{a_pde}-\eqref{IC_ab}, we use the following Volterra transformation 
\begin{align}
&\alpha(t,x)=u(t,x) \label{back1} \\
&\beta(t,x)=v(t,x) \nonumber \\
&\quad \quad -\int^x_0(K(x,\xi)u(\xi)+L(x,\xi)v(\xi))d\xi \label{back2}
\end{align}
where the kernels $K$ and $L$, defined on ~$\mathcal{T}=\{(x,\xi)\in [0,1]^2 | \xi \leq x\}$ have yet to be defined. Differentiating \eqref{back2} with respect to space and using the Leibniz rule yields 
\begin{multline}
\beta_x(t,x)=v_x(t,x)-K(x,x)u(t,x)-L(x,x)v(t,x)\\
-\int^x_0K_x(x,\xi)u(t,\xi)+L_x(x,\xi)v(t,\xi)d\xi
\end{multline}
Differentiating with respect to time, using \eqref{u_pde}, \eqref{v_pde} and integrating by parts yields
\begin{multline}
\beta_t(t,x)=\Lambda^-v_x(t,x)+\Sigma^{-+}u(t,x)+\Sigma^{--}v(t,x)\\
-\int^x_0\Bigg[K(x,\xi)\Sigma^{++}u(t,\xi)+K(x,\xi)\Sigma^{+-}v(t,\xi)\\
+L(x,\xi)\Sigma^{-+}u(t,\xi)+L(x,\xi)\Sigma^{--}v(t,\xi)\Bigg]d\xi \\
+K(x,x)\Lambda^+u(t,x) -K(x,0)\Lambda^+u(t,0)\\
-L(x,x)\Lambda^-v(t,x)+L(x,0)\Lambda^-v(t,0) \\ 
-\int^x_0\bigl[K_{\xi}(x,\xi)\Lambda^+u(t,\xi)-L_{\xi}(x,\xi)\Lambda^{-}v(t,\xi)\bigr]d\xi
\end{multline}
Plugging those expressions into the target system \eqref{back1}-\eqref{back2}, noticing that $\beta(t,0)=v(t,0)$ and using the corresponding boundary conditions \eqref{IC} yields the following system of kernel equations
\begin{align}
0=&\Sigma^{-+}+K(x,x)\Lambda^{+}+\Lambda^-K(x,x) \label{kernel_1}\\
0=&\Sigma^{--}+\Lambda^-L(x,x)-L(x,x)\Lambda^--\Omega(x)\label{kernel_2}\\
0=&K(x,0)\Lambda^+Q_0-L(x,0)\Lambda^{-} \label{kernel_3}\\
0=&\Lambda^-K_x(x,\xi)-K_{\xi}(x,\xi)\Lambda^+ -K(x,\xi)\Sigma^{++} \nonumber \\
&-L(x,\xi)\Sigma^{-+}+\Omega(x)K(x,\xi) \label{kernel_4}\\
0=&\Lambda^-L_x(x,\xi)+L_{\xi}(x,\xi)\Lambda^- -L(x,\xi)\Sigma^{--} \nonumber \\
&-K(x,\xi)\Sigma^{+-}+\Omega(x)L(x,\xi) \label{kernel_5}
\end{align}
We get the following equations for $C^-(x,\xi)$ and $C^+(x,\xi)$ 
\begin{align}
&C^-(x,\xi)=\Sigma^{+-}L(x,\xi)+\int^{x}_{\xi}C^{-}(x,s)L(s,\xi)ds \label{VolterraC} \\
&C^+(x,\xi)=\Sigma^{+-}K(x,\xi)+\int^{x}_{\xi}C^{-}(x,s)K(s,\xi)ds \label{eqC+}
\end{align}

\begin{remark} One can notice that for each $x\in [0,1]$, equation \eqref{VolterraC} is a Volterra equation on $[0,x]$ where $C^-(x,\cdot)$ is the unknown. Assuming that $K$ and $L$ are well defined and bounded, so is $C^-$. Using \eqref{eqC+} yields explicitly $C^+$ as a function of $C^-$ and $K$.\\
\end{remark}
Developing equation \eqref{kernel_1}-\eqref{kernel_5} we get the following set of kernel PDEs :

\underline{ for $1\leq i \leq m$, $1 \leq j \leq n$}
\begin{multline}
\mu_i\partial_xK_{ij}(x,\xi)-\lambda_j\partial_{\xi}K_{ij}(x,\xi)=\sum^n_{k=1}\sigma_{kj}^{++}K_{ik}(x,\xi) \\
+\sum^m_{p=1}\sigma^{-+}_{pj}L_{ip}(x,\xi)-\sum_{i\leq p\leq m}K_{pj}(x,\xi)\omega_{ip}(x) \label{eq_K}
\end{multline}

\underline{ for $1\leq i \leq m$, $1 \leq j \leq m$}
\begin{multline}
\mu_i\partial_xL_{ij}(x,\xi)+\mu_j\partial_{\xi}L_{ij}(x,\xi)=\sum^m_{k=1}\sigma_{kj}^{--}L_{ik}(x,\xi) \\
+\sum^n_{p=1}\sigma^{+-}_{pj}K_{ip}(x,\xi)-\sum_{i\leq p\leq m}L_{pj}(x,\xi)\omega_{ip}(x) \label{eq_L}
\end{multline}
with the following set of boundary conditions 
\begin{multline}
\forall 1 \leq i \leq m, \forall 1 \leq j \leq n, \quad  K_{ij}(x,x)=-\dfrac{\sigma_{ij}^{-+}}{\mu_i+\lambda_j} = k_{ij} \label{eq_K1}
\end{multline}

\begin{multline}
\forall 1 \leq i,j \leq m, j<i \quad L_{ij}(x,x)=\dfrac{-\sigma_{ij}^{--}}{\mu_i-\mu_j} \label{eq_L1}
\end{multline}
\begin{multline}
\forall 1 \leq i,j \leq m, \quad  \mu_jL_{ij}(x,0)=\sum^n_{k=1}\lambda_kK_{ik}(x,0)q_{kj} \label{eq_L2}
\end{multline}
Besides, \eqref{kernel_2} imposes 
\begin{align}
\forall  i \leq j \quad \omega_{ij}(x) = (\mu_i- \mu_j)L_{ij}(x,x)+\sigma_{ij}^{--} \label{eq_Omega}
\end{align}
This induces a coupling between the kernels through equations \eqref{eq_K} and \eqref{eq_L} that could appear as non linear at first sight. However, as it will appear in the proof of the following theorem, the coupling has a linear cascade structure. More precisely, the well-posedness of the target system is assessed in the following theorem.
\begin{theorem}
Consider system \eqref{eq_K}-\eqref{eq_L2}. There exists a unique solution $K$ and $L$ in $L^{\infty}(\mathcal{T})$. 
\end{theorem}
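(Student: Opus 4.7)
My plan is to exploit the cascade structure hinted at just before the theorem and argue by descending induction on the row index $i$ from $i = m$ down to $i = 1$. The key observation is that the coefficients $\omega_{ip}(x)$ for $p \geq i$ entering the right-hand sides of \eqref{eq_K} and \eqref{eq_L} are, through \eqref{eq_Omega}, determined by the diagonal traces $L_{ip}(x,x)$ of row $i$ of $L$ only, while the sums $\sum_{i \leq p \leq m} K_{pj}\omega_{ip}$ and $\sum_{i \leq p \leq m} L_{pj}\omega_{ip}$ involve only rows $p \geq i$. The $p = i$ contribution reduces to $\sigma^{--}_{ii}K_{ij}(x,\xi)$ (respectively $\sigma^{--}_{ii}L_{ij}(x,\xi)$) because $\omega_{ii}(x) = \sigma^{--}_{ii}$ is a constant, so no genuine bilinear self-coupling survives. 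For $p > i$ the factor $L_{ip}(x,x)$ is a row-$i$ unknown multiplying the already-known functions $K_{pj}(x,\xi)$, $L_{pj}(x,\xi)$; this is linear in the row-$i$ unknowns, which is exactly the ``linear cascade'' advertised.

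The base case $i = m$ is a closed coupled first-order hyperbolic system in $\{K_{mj}\}_{1 \leq j \leq n}$ and $\{L_{mj}\}_{1 \leq j \leq m}$ with $\omega_{mm}(x) = \sigma^{--}_{mm}$ and boundary data fully prescribed by \eqref{eq_K1}, \eqref{eq_L1}, \eqref{eq_L2}. In the inductive step, once rows $i+1, \ldots, m$ have been produced in $L^\infty(\mathcal{T})$, the equations for row $i$ form a closed linear system with bounded inhomogeneities coming from the higher rows. In both cases I would recast the PDE system as a fixed point on $L^\infty(\mathcal{T})$.

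The integral reformulation proceeds by the method of characteristics. For $K_{ij}$ the characteristic direction $(\mu_i, -\lambda_j)$ traces every point of $\mathcal{T}$ back to the diagonal $\xi = x$, where the prescribed value $k_{ij}$ in \eqref{eq_K1} serves as the Cauchy datum. For $L_{ij}$ the characteristic direction is $(\mu_i, \mu_j)$, and depending on whether $\mu_j/\mu_i$ is smaller or larger than $\xi/x$ a characteristic traces back either to the diagonal (where \eqref{eq_L1} is used when $j < i$) or to $\xi = 0$ (where \eqref{eq_L2} expresses $L_{ij}(x,0)$ as a linear combination of the traces $K_{ik}(x,0)$). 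Integrating along the characteristics yields a coupled Volterra-type integral system $(K, L) = \mathcal{F}(K, L)$ on $\mathcal{T}$, with $\mathcal{F}$ linear and bounded on $L^\infty(\mathcal{T})$.

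The hard part, and the technical heart of the argument, is establishing that the successive approximations $(K^{n+1}, L^{n+1}) = \mathcal{F}(K^n, L^n)$ converge uniformly on $\mathcal{T}$. Two features complicate the standard Picard-type estimate: first, the boundary relation \eqref{eq_L2} couples traces of $L$ and $K$ on $\xi = 0$, so bounding $L_{ij}$ along one characteristic requires bounding $K_{ik}$ along a different characteristic all the way back to the diagonal; second, within a single row the diagonal traces $L_{ip}(x,x)$ appear as coefficients multiplying known higher-row data, so the contraction must accommodate these self-referential coefficients. I would therefore treat the whole row-$i$ vector of unknowns as a single element of $L^\infty(\mathcal{T})$, bound the growth of $\|(K^{n+1}, L^{n+1}) - (K^n, L^n)\|_\infty$ by a factor of the form $(Cx)^n/n!$ through iterated integration along characteristics, and conclude normal convergence of the Neumann series for $\mathcal{F}$. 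Uniqueness follows from the same estimate applied to the difference of any two solutions, closing both the row-$i$ step and the induction.
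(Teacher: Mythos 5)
Your proposal follows essentially the same route as the paper: a descending induction on the row index $i$ (the paper phrases it as induction on $s = m+1-i$), exploiting that $\omega_{ii} = \sigma^{--}_{ii}$ kills the bilinear self-coupling while the $p>i$ terms are linear in the row-$i$ unknowns with known bounded coefficients, then integration along characteristics with substitution of the $K$ integral equation into the $L$ one at $\xi=0$, and finally successive approximations with the $(Mx)^q/q!$ bound. The only cosmetic difference is that for the base case $i=m$ the paper simply cites the well-posedness result of the $m=1$ case from the earlier reference rather than redoing the fixed-point argument.
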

The proof of this theorem is described in the following section and uses the cascade structure of the kernel equations (which is due to the particular shape of the matrix $\Omega$).

\section{Well-posedness of the kernel equation}
To prove the well-posedness of the kernel equations we classically (see \cite{john1960continuous} and \cite{whitham2011linear}) transform the kernel equations into integral equations and use the method of successive approximations. 
\\
\\
By induction, let us consider the following property $P(s)$ defined for all $1 \leq s \leq m$ : \\
$\forall \quad m+1-s \leq i \leq m$ the problem \eqref{eq_K}-\eqref{eq_L2} where $\Omega$ is defined by \eqref{eq_Omega} has a unique solution $K, L \in L^{\infty}(\mathcal{T})$. \\
\\
\underline{Initialization :} For $s=1$, system \eqref{eq_K}-\eqref{eq_L2} rewrites as follow \\
\underline{ for $1 \leq j \leq n$}
\begin{multline}
\mu_m\partial_xK_{mj}-\lambda_j\partial_{\xi}K_{mj}=\sum^n_{k=1}\sigma_{kj}^{++}K_{mk}(x,\xi) \\
+\sum^m_{p=1}\sigma^{-+}_{pj}L_{mp}(x,\xi)-K_{mj}(x,\xi)\sigma_{mm}^{--}
\end{multline}
\underline{ for $1 \leq j \leq m$}
\begin{multline}
\mu_m\partial_xL_{mj}+\mu_j\partial_{\xi}L_{mj}=\sum^m_{k=1}\sigma_{kj}^{--}L_{mk}(x,\xi) \\
+\sum^n_{p=1}\sigma^{+-}_{pj}K_{mp}(x,\xi)-L_{mj}(x,\xi)\sigma_{mm}^{--}
\end{multline}
with the following set of boundary conditions 
\begin{multline}
\forall 1 \leq j \leq n, \quad  K_{mj}(x,x)=-\dfrac{\sigma_{mj}^{-+}}{\mu_m+\lambda_j} = k_{1j}
\end{multline}
\begin{multline}
\forall 1 \leq j < m, \quad L_{mj}(x,x)=-\dfrac{\sigma_{mj}^{--}}{\mu_m-\mu_j}
\end{multline}
\begin{multline}
\forall 1 \leq j \leq m, \quad  \mu_jL_{1j}(x,0)=\sum^n_{k=1}\lambda_kK_{1k}(x,0)q_{kj} 
\end{multline}
The well-posedness of such system has been proved in \cite{di2013stabilization}. \\
\\
\underline{Induction :} Let us assume that the property $P(s-1)$ ($1<s \leq m-1$) is true. We consequently have that $\forall \quad m+2-s \leq p \leq m$, $\forall \quad 1 \leq j \leq n$, $\forall \quad 1 \leq l \leq m$ $K_{pj}(\cdot,\cdot)$ and $L_{pl}(\cdot,\cdot)$ are bounded. In the following we take $i=m+1-s$. We now show that \eqref{eq_K}-\eqref{eq_L2} is well-posed and that  $K_{ij}(\cdot,\cdot)$ and $L_{il}(\cdot,\cdot)$ $\in L^{\infty}(\mathcal{T})$

\subsection{Method of characteristics}
\subsubsection{Characteristics of the K kernels} For each $1\leq j \leq n$ and ($x, \xi$) $\in \mathcal{T}$, we define the following characteristic lines ($x_{ij}(x,\xi,\cdot), \xi_{ij}(x,\xi,\cdot)$) corresponding to equation \eqref{eq_K} 

\begin{align}
\left\{
    \begin{array}{l}
        \dfrac{dx_{ij}}{ds}(x,\xi,s) = -\mu_i \quad  s \in [0,s_{ij}^F(x,\xi)]\\
        x_{ij}(x,\xi,0)=x, \quad x_{ij}(x,\xi,s_{ij}^F(x,\xi))=x_{ij}^F(x,\xi) 
\end{array}
\right.
\label{char_K_1}
\end{align}
\begin{align}
\left\{
    \begin{array}{l}
        \dfrac{d\xi_{ij}}{ds}(x,\xi,s) = \lambda_j \quad  s \in [0,s_{ij}^F(x,\xi)]\\
        \xi_{ij}(x,\xi,0)=\xi, \quad \xi_{ij}(x,\xi,s_{ij}^F(x,\xi))=x_{ij}^F(x,\xi) 
\end{array}
\right.
\label{char_K_2}
\end{align}

These lines originate at the point $(x,\xi)$ and terminate on the hypothenuse at the point ($x_{ij}^F(x,\xi), x_{ij}^F(x,\xi))$. Integrating \eqref{eq_K} along these characteristics and using the boundary conditions \eqref{eq_K1} we get 
\begin{multline}
K_{ij}(x,\xi)=k_{ij}\\
+\int^{s_{ij}^F(x,\xi)}_0\bigl[\sum^n_{k=1}\sigma_{kj}^{++}K_{ik}(x_{ij}(x,\xi,s),\xi_{ij}(x,\xi,s))\\
+\sum^m_{k=1}\sigma_{kj}^{-+}L_{ik}(x_{ij}(x,\xi,s),\xi_{ij}(x,\xi,s))\\
-\sum_{i \leq p \leq m }K_{pj}(x_{ij}(x,\xi,s),\xi_{ij}(x,\xi,s)) \\
((\mu_i-\mu_p)L_{ip}(x_{ij}(x,\xi,s),x_{ij}(x,\xi,s))+\sigma_{ip}^{--})\bigr]ds
\label{eqIntK}
\end{multline}
We can notice that the last sum uses the expression of $K_{pj}$ for $i\leq p \leq m$. This term is known and bounded for $p>i$ (hypothesis of induction). For $p=i$, $\mu_i=\mu_p$ and the term $(\mu_i-\mu_p)L_{ip}(x_{ij}(x,\xi,s),x_{ij}(x,\xi,s)$ cancels.
\subsubsection{Characterisitcs of the L kernels} For each $1\leq j \leq n$ and ($x, \xi$) $\in \mathcal{T}$, we define the following characteristic lines ($\chi_{ij}(x,\xi,\cdot), \zeta_{ij}(x,\xi,\cdot)$) corresponding to equation \eqref{eq_L}

\begin{align}
\left\{
    \begin{array}{l}
        \dfrac{d\chi_{ij}}{d\nu}(x,\xi,s) = -\mu_i \quad  \nu \in [0,\nu_{ij}^F(x,\xi)]\\
        \chi_{ij}(x,\xi,0)=x, \quad \chi_{ij}(x,\xi,\nu_{ij}^F(x,\xi))=\chi_{ij}^F(x,\xi) 
\end{array}
\right.
\label{char_L_1}
\end{align}
\begin{align}
\left\{
    \begin{array}{l}
        \dfrac{d\zeta_{ij}}{d\nu}(x,\xi,s) = -\mu_j \quad  \nu \in [0,\nu_{ij}^F(x,\xi)]\\
        \zeta_{ij}(x,\xi,0)=\xi, \quad \zeta_{ij}(x,\xi,\nu_{ij}^F(x,\xi))=\zeta_{ij}^F(x,\xi) 
\end{array}
\right.
\label{char_L_2}
\end{align}
These lines all originates from $(x,\xi)$ and terminate at the point $(\chi_{ij}^F(x,\xi) ,\zeta_{ij}^F(x,\xi))$, i.e either at $(\chi_{ij}^F(x,\xi),\chi_{ij}^F(x,\xi))$ or at $(\chi_{ij}^F(x,\xi),0)$. Integrating \eqref{eq_L} along these characteristic and using the boundary conditions \eqref{eq_L1}, \eqref{eq_L2} yields
\begin{multline}
L_{ij}(x,\xi)=-\delta_{ij}(x,\xi)\dfrac{\sigma_{ij}^{--}}{\mu_i-\mu_j}\\
+(1-\delta_{ij})\dfrac{1}{\mu_j}\sum_{k=1}^n \lambda_kq_{kj}K_{ik}(\chi_{ij}^F(x,\xi),0)\\
+\int^{\nu_{ij}^F(x,\xi)}_0\bigl[\sum_{p=1}^m \sigma_{pj}^{--}L_{ip}(\chi_{ij}(x,\xi,\nu),\zeta_{ij}(x,\xi,\nu))\\
+\sum_{k=1}^n \sigma_{kj}^{+-}K_{ik}(\chi_{ij}(x,\xi,\nu),\zeta_{ij}(x,\xi,\nu))\\
-\sum_{i \leq p \leq m} L_{pj}(\chi_{ij}(x,\xi,\nu),\zeta_{ij}(x,\xi,\nu))\\
((\mu_i-\mu_p)L_{ip}(\chi_{ij}(x,\xi,\nu),\chi_{ij}(x,\xi,\nu))+\sigma_{ip}^{--})
\bigr]d\nu
\label{eqIntL}
\end{multline}
where the coefficient $\delta_{ij}(x,\xi)$ is defined by
\begin{align}
\delta_{i,j}(x,\xi)=\left\{
    \begin{array}{ll}
        1 & \mbox{if } j < i \quad \mbox{and} \quad \mu_i\xi-\mu_j x \geq 0 \\
        0 & \mbox{else}
            \end{array}
\right.
\end{align}
This coefficient reflects the facts that, as mentioned above, some characteristics terminate on the hypothenuse and others on the axis $\xi=0$. We can now plug \eqref{eqIntK} evaluated at ~$(\chi_{ij}^F(x,\xi),0)$ into \eqref{eqIntL} which yields
\begin{multline}
L_{ij}(x,\xi)=-\delta_{ij}(x,\xi)\dfrac{\sigma_{ij}^{--}}{\mu_i-\mu_j}\\
+(1-\delta_{ij})\dfrac{1}{\mu_j}\sum_{k=1}^n \lambda_kq_{kj}k_{ik}
+(1-\delta_{ij})\dfrac{1}{\mu_j}\sum_{r=1}^n \lambda_rq_{rj}\int_0^{s_{ir}^F(\chi_{ij}^F(x,\xi),0)} \\
\bigr[\sum^n_{k=1}\sigma_{kr}^{++}K_{ik}(x_{ir}(\chi_{ij}^F(x,\xi),0,s),\xi_{ir}(\chi_{ij}^F(x,\xi),0,s))\\
+\sum^m_{k=1}\sigma_{kr}^{-+}L_{ik}(x_{ir}(\chi_{ij}^F(x,\xi),0,s),\xi_{ir}(\chi_{ij}^F(x,\xi),0,s))\\
-\sum_{i \leq p \leq m}K_{pr}(x_{ir}(\chi_{ij}^F(x,\xi),0,s),\xi_{ir}(\chi_{ij}^F(x,\xi),0,s))\\
((\mu_i-\mu_p)L_{ip}(x_{ij}((\chi_{ij}^F(x,\xi),0,s),x_{ij}((\chi_{ij}^F(x,\xi),0,s))+\sigma_{ip}^{--})\bigr]ds\\
+\int^{\nu_{ij}^F(x,\xi)}_0\bigl[\sum_{p=1}^m \sigma_{pj}^{--}L_{ip}(\chi_{ij}(x,\xi,\nu),\zeta_{ij}(x,\xi,\nu))\\
+\sum_{k=1}^n \sigma_{kj}^{+-}K_{ik}(\chi_{ij}(x,\xi,\nu),\zeta_{ij}(x,\xi,\nu))\\
-\sum_{i \leq p \leq m} L_{pj}(\chi_{ij}(x,\xi,\nu),\zeta_{ij}(x,\xi,\nu))\\
((\mu_i-\mu_p)L_{ip}(\chi_{ij}(x,\xi,\nu),\chi_{ij}(x,\xi,\nu))+\sigma_{ip}^{--})
\bigr]d\nu
\label{eqInt}
\end{multline}

\subsection{Method of successive approximations}
 In order to solve the integral equations \eqref{eqIntK}, \eqref{eqInt} we use the method of successive approximations. We define 
 \begin{align}
 &\forall 1 \leq j \leq n \quad \phi_j^1(x,\xi)=k_{ij} \nonumber\\
&-\int_0^{s_{ij}^F(x,\xi)}\sum_{i<p\leq m}K_{pj}(x_{ij}(x,\xi,s),\xi_{ij}(x,\xi,s))\sigma_{ip}^{--}  \\
 &\forall 1 \leq j \leq  m \quad \phi_j^2(x,\xi)=-\delta_{ij}(x,\xi)\dfrac{\sigma_{ij}^{--}}{\mu_i-\mu_j} \nonumber \\
&+(1-\delta_{ij})\dfrac{1}{\mu_j}\sum_{k=1}^n \lambda_kq_{kj}k_{ik} \nonumber \\
&-(1-\delta_{ij})\dfrac{1}{\mu_j}\sum_{r=1}^n \lambda_rq_{rj}\int_0^{s_{ir}^F(\chi_{ij}^F(x,\xi),0)} \nonumber \\
&\sum_{i < p \leq m}K_{pr}(x_{ir}(\chi_{ij}^F(x,\xi),0,s),\xi_{ir}(\chi_{ij}^F(x,\xi),0,s))\sigma_{ip}^{--} \nonumber \\
&-\int_0^{s_{ij}^F(x,\xi)}\sum_{i<p\leq m}L_{pj}(\chi_{ij}(x,\xi,\nu),\zeta_{ij}(x,\xi,\nu))\sigma_{ip}^{--}
\end{align}
Besides we denote H as the vector containing the kernels 
\begin{align}\
H&=\begin{pmatrix} K_{i1} & \dots & K_{in} & L_{i1} & \cdots &L_{im} \end{pmatrix}^{\top}\\
\Psi&=\begin{pmatrix} \phi^1_{1} & \dots & \phi^1_{n} & \phi^2_{1} & \dots & \phi^2_{m} \end{pmatrix}^{\top}
\end{align}
We now consider the following operators : $\forall 1 \leq j \leq n$
\begin{multline}
 \Phi_j^1(H)(x,\xi)=\\
\int^{s_{ij}^F(x,\xi)}_0\bigl[\sum^n_{k=1}\sigma_{kj}^{++}K_{ik}(x_{ij}(x,\xi,s),\xi_{ij}(x,\xi,s))\\
+\sum^m_{k=1}\sigma_{kj}^{-+}L_{ik}(x_{ij}(x,\xi,s),\xi_{ij}(x,\xi,s))\\
-\sum_{i < p \leq m }K_{pj}(x_{ij}(x,\xi,s),\xi_{ij}(x,\xi,s)) \\
((\mu_i-\mu_j)L_{ip}(x_{ij}(x,\xi,s),x_{ij}(x,\xi,s)))\\
+\sigma_{ii}^{--}K_{ij}(x_{ij}(x,\xi,s),\xi_{ij}(x,\xi,s))\bigr]ds
\end{multline}
$\forall 1 \leq j \leq m$
\begin{multline}
 \Phi_j^2(H)(x,\xi)=\\
(1-\delta_{ij})\dfrac{1}{\mu_j}\sum_{r=1}^n \lambda_rq_{rj}\int_0^{s_{ir}^F(\chi_{ij}^F(x,\xi),0)} \\
\bigr[\sum^n_{k=1}\sigma_{kr}^{++}K_{ik}(x_{ir}(\chi_{ij}^F(x,\xi),0,s),\xi_{ir}(\chi_{ij}^F(x,\xi),0,s))\\
+\sum^m_{k=1}\sigma_{kr}^{-+}L_{ik}(x_{ir}(\chi_{ij}^F(x,\xi),0,s),\xi_{ir}(\chi_{ij}^F(x,\xi),0,s))\\
-\sum_{i < p \leq m}K_{pr}(x_{ir}(\chi_{ij}^F(x,\xi),0,s),\xi_{ir}(\chi_{ij}^F(x,\xi),0,s))\\
((\mu_i-\mu_p)L_{ip}(x_{ij}((\chi_{ij}^F(x,\xi),0,s),x_{ij}((\chi_{ij}^F(x,\xi),0,s))) \\
-K_{ir}(x_{ir}(\chi_{ij}^F(x,\xi),0,s),\xi_{ir}(\chi_{ij}^F(x,\xi),0,s))\sigma_{ii}^{--}\bigr]ds\\
+\int^{\nu_{ij}^F(x,\xi)}_0\bigl[\sum_{p=1}^m \sigma_{pj}^{--}L_{ip}(\chi_{ij}(x,\xi,\nu),\zeta_{ij}(x,\xi,\nu))\\
+\sum_{k=1}^n \sigma_{kj}^{+-}K_{ik}(\chi_{ij}(x,\xi,\nu),\zeta_{ij}(x,\xi,\nu))\\
-\sum_{i < p \leq m} L_{pj}(\chi_{ij}(x,\xi,\nu),\zeta_{ij}(x,\xi,\nu))\\
((\mu_i-\mu_p)L_{ip}(\chi_{ij}(x,\xi,\nu),\chi_{ij}(x,\xi,\nu))) \\
- L_{ij}(\chi_{ij}(x,\xi,\nu),\zeta_{ij}(x,\xi,\nu))\sigma_{ii}^{--}
\bigr]d\nu
\end{multline}
We set $\Phi[H](x,\xi)=[\Phi^1[H](x,\xi)^T,\Phi^2[H](x,\xi)^{\top}]^{\top}$
We define the following sequence 
\begin{align}
&H^0(x,\xi)=0 \\
&H^q(x,\xi)=\Psi(x,\xi)+\Phi(H^{q-1})(x,\xi)
\end{align}
Consequently, if the sequence $H^q$ has a limit, then this limit is a solution of the integral equation and therefore of the original system. \\
We define the increment $\Delta H^q=H^q-H^{q-1}$ (with $\Delta H^0=\Psi$). Provided the limit exists one has
\begin{align}
H(x,\xi)=\displaystyle \lim_{q \rightarrow + \infty} H^q(x,\xi)= \sum_{q=0}^{+ \infty}\Delta H^q(x,\xi)
\label{SUM}
\end{align}
We now prove the convergence of the series.
\begin{remark}
The proof of the convergence of the success approximations dries is similar to the one given in \cite{di2013stabilization}, since all the characteristic lines have the same direction along the $x-$axis.
\end{remark}
\subsection{Convergence of the successive approximation series}
Similarly to \cite{di2013stabilization, hu2015control} we want to find a recursive upper bound in order to prove the convergence of the series.
We first define 
\begin{align}
&\bar{\Phi}=\displaystyle \max_{j} \displaystyle \max_{(x,\xi)\in \mathcal{T}} \{ |\phi^1_{i,j}(x,\xi)|, | \phi^2_{ij}(x,\xi)| \} \\
&\bar{\sigma}=\displaystyle \max_{k,j}\{\sigma^{++}_{kj},\sigma^{+-}_{kj},\sigma^{-+}_{kj},\sigma^{--}_{kj}\}, \quad \bar{q}=\displaystyle \max_{k,j}\{q_{kj}\} \nonumber \\
&\bar{\mu}=\displaystyle \max_{p}\{|\mu_i-\mu_p|\}, \quad \bar{\lambda}=\max\{\lambda_n, \mu_n\} \nonumber \\
&\tilde{\lambda}=\max\{\dfrac{1}{\lambda_1},\dfrac{1}{\mu_1}\}, \quad \nonumber \\
&M_{\lambda}= \displaystyle \max_{j=1,\dots,m}\{\dfrac{1}{\mu_j}\}\nonumber
\end{align}
We then define $\bar{S}=\displaystyle \max_{p>i, 1\leq j \leq n} \{||K_{pj}||, ||L_{pj}\}$ which is well defined according to the hypothesis $P(s-1)$. Moreover we set 
\begin{align}
M=(n\bar{\lambda}\tilde{\lambda}\bar{q}+1)[(n+m+1)\bar{\sigma}+m\bar{\mu}\bar{S}]M_{\lambda}
\end{align}
We recall the following result from \cite[Lemma 5.5]{di2013stabilization}
\\
\begin{lemma}
For any integer $q$, $(x,\xi) \in \mathcal{T}$ and $s_{ij}^F(x,\xi), \nu^F_{ij}(x,\xi), x_{ij}(x,\xi,\cdot), \xi_{ij}(x,\xi, \cdot), \chi_{ij}(x,\xi,\cdot), \zeta_{ij}(x,\xi,\cdot)$ defined as in \eqref{char_K_1}, \eqref{char_K_2}, \eqref{char_L_1}, \eqref{char_L_2} respectively, the following inequalities holds 
\begin{align}
\forall 1 \leq k \leq m, &\quad \forall 1 \leq j \leq n \nonumber \\
&\int_0^{s_{kj}^F(x,\xi)}(x_{kj}(x,\xi,s))^qds \leq M_{\lambda}\dfrac{x^{q+1}}{q}  \label{ineq_1} \\
\forall 1 \leq k \leq m, &\quad \forall 1 \leq j \leq n \nonumber \\
&\int_0^{\nu_{kj}^F(x,\xi)}(\chi_{ij}(x,\xi,s))^qds \leq M_{\lambda}\dfrac{x^{q+1}}{q} \label{ineq_2} 
\end{align}  
\end{lemma}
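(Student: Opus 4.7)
The plan is to exploit the fact that both characteristic ODEs \eqref{char_K_1} and \eqref{char_L_1} have constant right-hand sides, so that the characteristics are affine in their parameter and the integrals reduce to elementary power-function integrals.

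First I would integrate the ODEs explicitly, obtaining $x_{kj}(x,\xi,s) = x - \mu_k s$ and $\chi_{kj}(x,\xi,\nu) = x - \mu_k \nu$. The termination time $s_{kj}^F(x,\xi)$ is the first $s$ at which the characteristic meets the hypothenuse $\{\xi = x\}$, while $\nu_{kj}^F(x,\xi)$ is the first $\nu$ at which either the hypothenuse or the axis $\{\xi = 0\}$ is reached. In every case the terminal value $x_{kj}^F$, respectively $\chi_{kj}^F$, lies in $[0,x]$, since the $x$-component is strictly decreasing in $s$ (resp. $\nu$) and the characteristic is not continued past the triangle $\mathcal{T}$.

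Next I would perform the change of variables $u = x - \mu_k s$ to obtain
\[
\int_0^{s_{kj}^F(x,\xi)} (x - \mu_k s)^q\, ds = \frac{1}{\mu_k(q+1)}\bigl(x^{q+1} - (x_{kj}^F)^{q+1}\bigr) \leq \frac{1}{\mu_k(q+1)} x^{q+1},
\]
where the last inequality uses $x_{kj}^F \geq 0$. Since $1/\mu_k \leq 1/\mu_1 = M_\lambda$ and $1/(q+1) \leq 1/q$ for $q \geq 1$, this is bounded by $M_\lambda\, x^{q+1}/q$, which is exactly \eqref{ineq_1}. The second estimate \eqref{ineq_2} is obtained identically, replacing $s$ by $\nu$ and $x_{kj}$ by $\chi_{kj}$; the possibility of two different terminal positions (on the hypothenuse or on $\{\xi = 0\}$) is immaterial, because both lie in $[0,x]$ and the slope $-\mu_k$ along $\chi_{kj}$ is the same as along $x_{kj}$.

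The main obstacle is essentially nonexistent here: the lemma is a direct consequence of the constant-coefficient structure of the transport operators, and the proof is bookkeeping. The only points that warrant attention are (i) verifying that the terminal value $x_{kj}^F$ (resp. $\chi_{kj}^F$) is nonnegative, so that dropping the $(x_{kj}^F)^{q+1}$ term only weakens the inequality, and (ii) the implicit restriction $q \geq 1$ in the statement, since the right-hand side $M_\lambda x^{q+1}/q$ is infinite at $q = 0$ and the bound is vacuous there.
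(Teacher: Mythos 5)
Your proof is correct. The paper itself gives no proof of this lemma --- it is simply recalled from \cite[Lemma 5.5]{di2013stabilization} --- and your argument (explicit integration of the constant-coefficient characteristics, the substitution $u = x - \mu_k s$, nonnegativity of the terminal abscissa $x_{kj}^F$ resp.\ $\chi_{kj}^F$ inside $\mathcal{T}$, and the chain $1/\mu_k \leq M_\lambda$, $1/(q+1)\leq 1/q$) is exactly the elementary computation one expects behind the cited result. Your two caveats are also well taken: the terminal point does lie in $[0,x]$ in both termination scenarios for the $L$-characteristics (on the hypothenuse when $\mu_i\xi - \mu_j x \geq 0$, on the axis $\xi=0$ otherwise, and in each case $\chi_{kj}^F \geq 0$), and the bound is only meaningful for $q \geq 1$, which is the range actually used in the subsequent induction.
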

\begin{lemma}
Assume that for some $1\leq q$, one has, for all ~$(x, \xi) \in \mathcal{T}$
\begin{align}
\forall j=1,...m+n \quad |\Delta H^q_j(x,\xi)| \leq \bar{\Phi}\dfrac{M^qx^{q}}{q!}
\label{hyp_rec}
\end{align}
where $\Delta H^q_j(x,\xi)$ is the $j$-th component of $\Delta H^q(x,\xi)$.\\
Then, one has
\begin{align}
\forall j=1,...m+n \quad |\Delta H^{q+1}_j(x,\xi)| \leq \bar{\Phi}\dfrac{M^{q+1}x^{q+1}}{(q+1)!}
\end{align}
\end{lemma}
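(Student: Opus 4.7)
The key structural observation is that the operator $\Phi$ defined on $H$ is \emph{affine} in its argument: the only products of components of $H$ with themselves would come from terms of the form $K_{pj}(\mu_i-\mu_p)L_{ip}$ with $p>i$, but the factors $K_{pj}$ (and similarly $L_{pj}$) for $p>i$ are already determined and bounded by $\bar S$ thanks to the induction hypothesis $P(s-1)$; the diagonal case $p=i$ has been pulled out since $\mu_i-\mu_p=0$ there. In consequence, subtracting the defining recursions for $H^{q+1}$ and $H^q$ gives $\Delta H^{q+1}(x,\xi) = \Phi(\Delta H^q)(x,\xi)$, so that the inductive hypothesis \eqref{hyp_rec} can be propagated term by term through the integrals defining $\Phi^1_j$ and $\Phi^2_j$.

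\textbf{Execution.} First I would apply the triangle inequality inside $\Phi^1_j(\Delta H^q)$ and $\Phi^2_j(\Delta H^q)$ and use \eqref{hyp_rec} to replace every occurrence of $|\Delta H^q_k|$ (evaluated at a characteristic point) by $\bar\Phi\,M^q(x_{ij}(x,\xi,s))^q/q!$ or $\bar\Phi\,M^q(\chi_{ij}(x,\xi,\nu))^q/q!$. Each sum inside the integrand then contributes a bounded coefficient: the two $\sigma$-sums give $(n+m)\bar\sigma$, the $\sum_{i<p\leq m}$-term contributes $(m-i)\bar\mu\bar S\leq m\bar\mu\bar S$ (since $|K_{pj}|\leq\bar S$), and the remaining diagonal $\sigma^{--}_{ii}$ term contributes $\bar\sigma$, for a total of $(n+m+1)\bar\sigma+m\bar\mu\bar S$. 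Applying Lemma~3 (equations \eqref{ineq_1}, \eqref{ineq_2}) bounds each characteristic integral $\int_0^{s^F_{ij}(x,\xi)}(x_{ij})^q\,ds$ and $\int_0^{\nu^F_{ij}(x,\xi)}(\chi_{ij})^q\,d\nu$ by $M_\lambda x^{q+1}/(q+1)$. This yields
\begin{align}
|\Phi^1_j(\Delta H^q)(x,\xi)| \leq \bar\Phi\,[(n+m+1)\bar\sigma+m\bar\mu\bar S]\,M_\lambda\,\frac{M^q x^{q+1}}{(q+1)!}.\nonumber
\end{align}

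\textbf{The $\Phi^2_j$ bound.} For $\Phi^2_j$, the same estimate handles the outer integral on $[0,\nu^F_{ij}]$. The boundary contribution carries an extra prefactor $\frac{1}{\mu_j}\sum_{r=1}^n\lambda_r q_{rj}\leq n\bar\lambda\tilde\lambda\bar q$, and its inner integral is of the same form as in $\Phi^1$ but starting from the point $(\chi^F_{ij}(x,\xi),0)$; since $\chi^F_{ij}(x,\xi)\leq x$, Lemma~3 again gives a bound by $M_\lambda x^{q+1}/(q+1)$. Adding the boundary piece to the direct piece produces the factor $(1+n\bar\lambda\tilde\lambda\bar q)$, so
\begin{align}
|\Phi^2_j(\Delta H^q)(x,\xi)| \leq \bar\Phi\,(1+n\bar\lambda\tilde\lambda\bar q)[(n+m+1)\bar\sigma+m\bar\mu\bar S]\,M_\lambda\,\frac{M^q x^{q+1}}{(q+1)!}.\nonumber
\end{align}
By the definition of $M$, both right-hand sides are exactly $\bar\Phi\,M^{q+1}x^{q+1}/(q+1)!$, which closes the induction.

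\textbf{Main obstacle.} The real subtlety is not the estimation itself but justifying the affine dependence of $\Phi$ on $H$: one must carefully check that each product of two kernel components appearing in $\Phi$ involves one factor that is frozen by the induction hypothesis $P(s-1)$ (hence bounded by $\bar S$), so that the map $H\mapsto \Phi(H)$ is linear in $H$ and the identity $\Delta H^{q+1}=\Phi(\Delta H^q)$ really holds. Once this linearity is in hand, the geometric-type bound follows from a routine combination of the induction hypothesis with Lemma~3, and the convergence of the series \eqref{SUM} (hence existence of the solution) is immediate by comparison with the exponential series $\bar\Phi\sum_q (Mx)^q/q!$.
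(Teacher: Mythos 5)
Your proof is correct and follows essentially the same route as the paper's: the same case split between the $\Phi^1_j$ and $\Phi^2_j$ components, the same coefficient count $(n+m+1)\bar\sigma+m\bar\mu\bar S$ with the extra factor $n\bar\lambda\tilde\lambda\bar q+1$ for the boundary contribution, the same appeal to Lemma~3 for the characteristic integrals, and the same closing via the definition of $M$. Your explicit justification that $\Phi$ is affine in $H$ (because the quadratic-looking terms pair an unknown with a kernel already frozen by $P(s-1)$, the diagonal $p=i$ term vanishing since $\mu_i-\mu_p=0$) is a point the paper leaves implicit, and is a welcome clarification rather than a departure.
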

\begin{proof}
Assume that \eqref{hyp_rec} holds for some fixed $1\leq q$. Let us consider $1 \leq j \leq (m+n)$.\\
\\
\underline{Case $j \leq n$}
\begin{multline}
|\Delta H_j^{q+1}| =\\
 |\int^{s_{ij}^F(x,\xi)}_0\bigl[\sum^n_{k=1}\sigma_{kj}^{++}\Delta K^q_{ik}(x_{ij}(x,\xi,s),\xi_{ij}(x,\xi,s))\\
+\sum^m_{k=1}\sigma_{kj}^{-+}\Delta L^q_{ik}(x_{ij}(x,\xi,s),\xi_{ij}(x,\xi,s))\\
-\sum_{i < p \leq m}K_{pj}(x_{ij}(x,\xi,s),\xi_{ij}(x,\xi,s))\\
\cdot(\mu_i-\mu_p)\Delta L_{ip}^q(x_{ij}(x,\xi,s),x_{ij}(x,\xi,s)) \\
-\Delta K^q_{ij}(x_{ij}(x,\xi,s),\xi_{ij}(x,\xi,s))\sigma_{ii}^{--}
\bigr]ds|
\end{multline}
Consequently, using \eqref{hyp_rec} and \eqref{ineq_1}
\begin{align}
|\Delta H_j^{q+1}| &\leq \int^{s_{ij}^F(x,\xi)}_0 ((n+m+1)\bar{\sigma}+m\bar{S}\bar{\mu})\\
&\quad\cdot \bar{\Phi}\dfrac{M^q(x_{ij}(x,\xi,s))^q}{q!}ds \nonumber\\
& \leq ((n+m+1)\bar{\sigma}+m\bar{S}\bar{\mu})\dfrac{\bar{\Phi}M^q}{q!}M_{\lambda}\dfrac{x^{q+1}}{q+1}\nonumber \\
& \leq \bar{\Phi}\dfrac{M^{q+1}x^{q+1}}{(q+1)!}
\end{align}
\underline{Case $n < j \leq n+m$}
Using \eqref{hyp_rec} we get 
\begin{align}
|\Delta H_j^{q+1}| &\leq \bar{\lambda}\tilde{\lambda}\bar{q}((n+m+1)\bar{\sigma}+m\bar{S}\bar{\mu})\sum_{r=1}^n \nonumber \\
&\cdot \int_0^{s_{ir}^F(\chi_{ij}^F(x,\xi),0)}\bar{\phi}\dfrac{M^q(x_{ir}(\chi_{ij}^F(x,\xi),0,s))^q}{q!} \nonumber \\
&+((n+m+1)\bar{\sigma}+m\bar{S}\bar{\mu})\int_0^{\nu_{ij}^F(x,\xi)}\bar{\Phi}\dfrac{M^q(\chi_{ij})^q}{q!}d\nu \nonumber \\
& \leq (n\tilde{\lambda}\bar{\lambda}\bar{q}+1)((n+m+1)\bar{\sigma}+m\bar{S}\bar{\mu})\bar{\Phi}M_{\lambda}\dfrac{M^qx^{q+1}}{(q+1)!} \nonumber \\
& \leq \bar{\Phi}\dfrac{M^{q+1}x^{q+1}}{(q+1)!}
\end{align}
This concludes the proof
\end{proof}
Consequently, using similar procedures that the ones presented in \cite{di2013stabilization,vazquez2011local}, we get that \eqref{SUM} converges and thus the property $P(s)$ is true. This concludes the proof by induction of Theorem 1. 

\section{Control law and main results}
We now state the main stabilization result as follows.
\begin{theorem}
System \eqref{u_pde}-\eqref{v_pde} with boundary conditions \eqref{IC} and the following feedback control law 
\begin{align}
U(t) &= -R_1u(t,1) \nonumber \\
+& \int_0^1[K(1,\xi)u(t,\xi)+L(1,\xi)v(t,\xi)]d\xi \label{eq_U}
\end{align}
reaches its zero equilibrium in finite time $t_F=$ where $t_F$ is given by \eqref{t_F}. The zero equilibrium is exponentially stable in the $L^2$-sense.
\end{theorem}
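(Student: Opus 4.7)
The plan is to verify that the feedback law~\eqref{eq_U} enforces the target-system boundary condition $\beta(t,1)=0$, and then to invoke Lemma~1 and Lemma~2 together with the invertibility of the Volterra transformation~\eqref{back1}--\eqref{back2} to transfer exponential $\mathcal{L}^2$ stability and finite-time convergence back to $(u,v)$.

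First, I would evaluate the transformation at $x=1$. From~\eqref{back2},
\[
\beta(t,1) = v(t,1) - \int_0^1 \bigl[K(1,\xi)u(t,\xi) + L(1,\xi)v(t,\xi)\bigr]\,d\xi.
\]
Substituting the original boundary condition $v(t,1)=R_1 u(t,1)+U(t)$ from~\eqref{IC} together with the feedback~\eqref{eq_U} causes the $R_1 u(t,1)$ term and the integral term to cancel exactly, leaving $\beta(t,1)=0$. Combined with the kernel equations~\eqref{kernel_1}--\eqref{kernel_5}, whose solvability is guaranteed by Theorem~1, this confirms that the closed-loop $(u,v)$ system is mapped through~\eqref{back1}--\eqref{back2} to the target system~\eqref{a_pde}--\eqref{IC_ab}.

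Second, I would establish that the transformation~\eqref{back1}--\eqref{back2} is boundedly invertible on $\mathcal{L}^2([0,1])$. Since $\alpha=u$ is trivial and the $v$-component is of Volterra (lower-triangular) type with $K,L\in L^\infty(\mathcal{T})$, a Neumann-series / fixed-point argument yields an inverse of the same form,
\[
u(t,x)=\alpha(t,x), \qquad v(t,x)=\beta(t,x)+\int_0^x\bigl[\tilde K(x,\xi)\alpha(t,\xi)+\tilde L(x,\xi)\beta(t,\xi)\bigr]\,d\xi,
\]
with $\tilde K,\tilde L\in L^\infty(\mathcal{T})$. The inverse kernels satisfy a cascade of integral equations analogous to~\eqref{eqIntK},~\eqref{eqInt}, whose well-posedness can be obtained by rerunning the induction of Section~IV. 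Consequently both the direct and the inverse transformations are bounded linear operators, and the $\mathcal{L}^2$ norms of $(u,v)$ and $(\alpha,\beta)$ are equivalent.

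Finally, applying Lemma~1 yields exponential $\mathcal{L}^2$ stability of $(\alpha,\beta)$, and Lemma~2 yields convergence to zero at time $t_F=\tfrac{1}{\mu_1}+\tfrac{1}{\lambda_1}$; norm equivalence transfers both properties to $(u,v)$, which proves the theorem. The main obstacle in this plan is the invertibility step: one must verify that the cascade integral system for $(\tilde K,\tilde L)$ admits a unique bounded solution via the same characteristics/successive-approximations machinery used for $(K,L)$. Everything else follows directly from results already established earlier in the paper.
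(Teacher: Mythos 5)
Your proposal is correct and follows essentially the same route as the paper's proof: verify that the feedback law forces $\beta(t,1)=0$ by evaluating \eqref{back2} at $x=1$, invert the Volterra transformation, and apply Lemma~2 (and Lemma~1) to the target system. The one difference is that the step you single out as the main obstacle --- bounded invertibility of the transformation --- is handled in the paper as a standard fact about Volterra equations of the second kind with bounded kernels (citing Hochstadt), so no rerun of the Section~IV characteristics/successive-approximations machinery is required for the inverse kernels.
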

\begin{proof}
Notice first that evaluating \eqref{back2} at $x=1$ yields \eqref{eq_U}. Besides, rewriting \eqref{back2} as follows
\begin{align}
\begin{pmatrix} \alpha(t,x) \\ \beta(t,x) \end{pmatrix} &= \begin{pmatrix} u(t,x) \\ v(t,x) \end{pmatrix} \nonumber \\
&- \int_0^x\begin{pmatrix}0 & 0 \\ K(x,\xi) & L(x,\xi) \end{pmatrix}\begin{pmatrix}u(t,\xi) \\ v(t,\xi) \end{pmatrix} d\xi
\end{align}
It is a classical Volterra equation of the second kind. One can check from \cite{hochstadt2011integral} that there exists a unique function $\mathcal{S}$ such that 
\begin{align}
\begin{pmatrix}u(t,x) \\ v(t,x)\end{pmatrix}=\begin{pmatrix}\alpha(t,x) \\ \beta(t,x) \end{pmatrix} - \int_0^x  \mathcal{S}(x,\xi) \begin{pmatrix} \alpha(t,\xi) \\ \beta(t,\xi) \end{pmatrix}
\end{align}
Applying Lemma 2 implies that $(\alpha, \beta)$ go to zero in finite time $t_F$, therefore $(u,v)$ converge to zero in finite time
\end{proof}
\begin{remark}
The time of convergence $t_F$ is smaller than the one given in \cite{hu2015control}. Nevertheless we have lost here some degrees of freedom in the kernel equations and thus in the controller gains.
\end{remark}
\section{Uncollocated observer design and output feedback controller}
In this section we design an observer that relies on the measurements of $v$ at the left boundary, i.e we measure 
\begin{align}
y(t)=v(t,0)
\end{align}
Then, using the estimates given by our observer and the control law \eqref{eq_U}, we derive an output feedback controller.

\subsection{Observer design}
The observer equations read as follows
\begin{align}
\hat{u}_t(t,x)+\Lambda^+\hat{u}_x(t,x)=&\Sigma^{++}\hat{u}(t,x)+\Sigma^{+-}\hat{v}(t,x) \nonumber \\
& -P^+(x)(\hat{v}(t,0)-v(t,0)) \label{hat_u}\\
\hat{v}_t(t,x)+\Lambda^-\hat{v}_x(t,x)=&\Sigma^{-+}\hat{u}(t,x)+\Sigma^{--}\hat{v}(t,x) \nonumber \\
& -P^-(x)(\hat{v}(t,0)-v(t,0))  \label{hat_v}
\end{align}
with the boundary conditions 
\begin{align}
\hat{u}(t,0)=Q_0v(t,0), \quad \hat{v}(t,1)=R_1\hat{u}(t,1) +U \label{hat_boundary}
\end{align}
where $P^+(\cdot)$ and $P^-(\cdot)$ have yet to be designed. This yield the following error system
\begin{align}
\tilde{u}_t(t,x)+\Lambda^+\tilde{u}_x(t,x)=&\Sigma^{++}\tilde{u}(t,x)+\Sigma^{+-}\tilde{v}(t,x) \nonumber \\
& -P^+(x)\tilde{v}(t,0) \label{tilde_u}\\
\tilde{v}_t(t,x)+\Lambda^-\tilde{v}_x(t,x)=&\Sigma^{-+}\tilde{u}(t,x)+\Sigma^{--}\tilde{v}(t,x) \nonumber \\
& -P^-(x)\tilde{v}(t,0) \label{tilde_v}
\end{align}
with the boundary conditions 
\begin{align}
\tilde{u}(t,0)=0, \quad \tilde{v}(t,1)=R_1\tilde{u}(t,1) \label{tilde_boundary}
\end{align}

\subsection{Target system}

We map the system \eqref{tilde_u}-\eqref{tilde_boundary} to the following system 
\begin{align}
\tilde{\alpha}_t(t,x) + \Lambda^+ \tilde{\alpha}_x(t,x) &= \Sigma^{++} \tilde{\alpha}(t,x) \nonumber \\
&+\int_0^xD^+(x,\xi)\tilde{\alpha}(t,\xi)d\xi \label{tilde_a} \\
\tilde{\beta}_t(t,x)-\Lambda^-\tilde{\beta}_x(t,x)&=\Sigma^{-+}\tilde{\alpha}(t,x)+\Omega(x)\beta(t,x) \nonumber \\
&+ \int_0^xD^-(x,\xi)\tilde{\alpha}(t,\xi)d\xi \label{tilde_b}
\end{align}
with the following boundary conditions
\begin{align}
\tilde{\alpha}(t,0)=0, \quad \tilde{\beta}(t,1)=R_1\tilde{\alpha}(t,1) \label{tilde_boundary2}
\end{align}
where $D^+$, and $D^-$ are $L^{\infty}$ matrix functions of the domain $\mathcal{T}$ and $\Omega \in L^{\infty}(0,1)$ is an upper triangular matrix with the following structure 
\begin{align}
\Omega(x)= \begin{pmatrix} \omega_{1,1}(x) & \omega_{1,2}(x) & \dots & \omega_{1,m}(x) \\ 0 & \ddots & \ddots & \vdots \\  \vdots & \ddots  & \omega_{m-1,m-1}(x) &\omega_{m-1,m}(x)  \\ 0 &\dots & 0 & \omega_{m,m}(x) \end{pmatrix}
\end{align}
\begin{lemma}
The system \eqref{tilde_a}, \eqref{tilde_b} reaches its zero equilibrium in a finite time $t_F$ where $t_F$ is defined by \eqref{t_F}
\end{lemma}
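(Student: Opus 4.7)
The plan is to exploit the cascade structure of the target system \eqref{tilde_a}--\eqref{tilde_boundary2}: the $\tilde{\alpha}$-subsystem is autonomous (independent of $\tilde{\beta}$), while the $\tilde{\beta}$-subsystem, although driven by $\tilde{\alpha}$, reduces to an autonomous problem with homogeneous boundary once $\tilde{\alpha}$ has vanished. I would split the argument into two phases, corresponding to the two summands of $t_F=1/\lambda_1+1/\mu_1$.

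First phase: show that $\tilde{\alpha}(t,\cdot)\equiv 0$ for $t\geq 1/\lambda_1$. Equation \eqref{tilde_a} together with $\tilde{\alpha}(t,0)=0$ forms a linear hyperbolic system with rightward speeds $\lambda_i>0$. For every point $(t,x)$ in the triangular region $R=\{(t,x)\in[0,\infty)\times[0,1]:x\leq \lambda_1 t\}$, each characteristic of $\tilde{\alpha}_i$ traces back to $(t_0,0)$ with $t_0=t-x/\lambda_i\geq 0$, where the boundary value is zero. Integrating \eqref{tilde_a} along these characteristics turns the problem into a Volterra-type integral equation whose source terms (from $\Sigma^{++}$ and from the spatial convolution with $D^+$) depend only on values of $\tilde{\alpha}$ at points that remain inside $R$. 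Since $\tilde{\alpha}\equiv 0$ trivially solves this self-consistent equation, a standard Picard/Gronwall uniqueness argument in supremum norm on $R$ pins the zero solution. Because $R$ covers $\{t\}\times[0,1]$ as soon as $t\geq 1/\lambda_1$, this closes the first phase.

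Second phase: for $t\geq 1/\lambda_1$, substituting $\tilde{\alpha}\equiv 0$ into \eqref{tilde_b} and \eqref{tilde_boundary2} yields $\tilde{\beta}_t-\Lambda^-\tilde{\beta}_x=\Omega(x)\tilde{\beta}$ on $[0,1]$ with the homogeneous right boundary $\tilde{\beta}(t,1)=R_1\tilde{\alpha}(t,1)=0$. The same characteristic-based argument, now with leftward speeds $-\mu_i<0$ and zero ``initial data'' supplied by the right boundary, shows $\tilde{\beta}\equiv 0$ on the triangular region $R'=\{(t,x):1-x\leq \mu_1(t-1/\lambda_1)\}$, which fills the whole spatial domain at $t=1/\lambda_1+1/\mu_1=t_F$.

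The main obstacle is the uniqueness step in each phase: because of the in-domain couplings ($\Sigma^{++}$, $\Omega$) and the integral kernels ($D^\pm$), the source terms are nonlocal in space and a naive ODE-in-time uniqueness argument along a single characteristic is not enough. The key algebraic check, analogous to the one used in the proof of Lemma~2 above and in \cite[Lemma 3.1]{hu2015control}, is that the source integrals evaluated at any point $(s,\xi)$ on a characteristic lying inside the triangular region remain inside the same region; this follows from $\lambda_1\leq \lambda_i$ (respectively $\mu_1\leq \mu_i$) by a direct computation, and it closes a Picard iteration on the supremum norm over $R$ (respectively $R'$), forcing $\tilde{\alpha}$ and then $\tilde{\beta}$ to vanish on the claimed regions.
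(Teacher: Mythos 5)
Your proposal is correct and follows essentially the same route as the paper, which simply observes that \eqref{tilde_a}--\eqref{tilde_boundary2} is a cascade of the autonomous $\tilde{\alpha}$-system (zero left-boundary input, hence extinct after $1/\lambda_1$) into the $\tilde{\beta}$-system (whose right-boundary input vanishes once $\tilde{\alpha}\equiv 0$, hence extinct after a further $1/\mu_1$) and defers the details to the proof of Lemma~2 and \cite[Lemma 3.1]{hu2015control}. Your two-phase characteristics argument, including the check that the nonlocal source terms stay inside the triangular regions because $\lambda_1\leq\lambda_i$ and $\mu_1\leq\mu_i$, is precisely the omitted rigorous version of that sketch.
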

\begin{proof}
The system is a cascade of $\tilde{\alpha}$-system (that has zero input at the led boundary) into the $\beta$-system (that has zero input at the right boundary once $\tilde{\alpha}$ becomes null).
The rigorous proof of the lemma follows the same step of the proof of Lemma 2 and is omitted here.
\end{proof}

\subsection{Volterra Transformation}
In order to map the original system \eqref{tilde_u}-\eqref{tilde_boundary} to the target system \eqref{tilde_a}-\eqref{tilde_boundary2}, we use the following Volterra transformation
\begin{align}
\tilde{u}(t,x)=\tilde{\alpha}(t,x)+\int_0^xM(x,\xi)\tilde{\beta}(t,\xi)d\xi \label{eq_M0} \\
\tilde{v}(t,x)=\tilde{\beta}(t,x)+\int_0^xN(x,\xi)\tilde{\beta}(t,\xi)d\xi \label{eq_N0}
\end{align}
where the kernels $M$ and $N$ defined on ~$\mathcal{T}=\{(x,\xi)\in [0,1]^2 | \xi \leq x\}$ have yet to defined. Differentiating \eqref{eq_M0}, \eqref{eq_N0} with respect to space and time yields the following kernel equations\\
\underline{ for $1\leq i \leq n$, $1 \leq j \leq m$}
\begin{multline}
\lambda_i\partial_xM_{ij}(x,\xi)-\mu_j\partial_{\xi}M_{ij}(x,\xi)=\sum^n_{k=1}\sigma_{ik}^{++}M_{kj}(x,\xi) \\
+\sum^m_{p=1}\sigma^{-+}_{ip}N_{pj}(x,\xi)-\sum_{p=1}^mM_{ip}(x,\xi)\omega_{pj}(x) \label{eq_M}
\end{multline}

\underline{ for $1\leq i \leq m$, $1 \leq j \leq m$}
\begin{multline}
\mu_i\partial_xN_{ij}(x,\xi)+\mu_j\partial_{\xi}N_{ij}(x,\xi)=-\sum^n_{k=1}\sigma_{ik}^{--}N_{kj}(x,\xi) \\
-\sum^n_{p=1}\sigma^{+-}_{ip}M_{pj}(x,\xi)+\sum_{p=1}^{m}N_{ip}(x,\xi)\omega_{pj}(x) \label{eq_N}
\end{multline}
with the following set of boundary conditions :\\
\begin{multline}
\forall 1 \leq i \leq m, \forall 1 \leq j \leq n, \quad  M_{ij}(x,x)=-\dfrac{\sigma_{ij}^{+-}}{\mu_j+\lambda_i} = k_{ij} \label{eq_M1}
\end{multline}
\begin{multline}
\forall 1 \leq i,j \leq m, j< i \quad N_{ij}(x,x)=\dfrac{-\sigma_{ij}^{--}}{\mu_j-\mu_i} \label{eq_N1}
\end{multline}
\begin{align}
\forall  i \leq j \quad \omega_{ij}(x) = (\mu_j- \mu_i)N_{ij}(x,x)+\sigma_{ij}^{--} \label{eq_Omega2}
\end{align}
Evaluating \eqref{eq_M0}, \eqref{eq_N0} at $x=1$ yields
\begin{multline}
\forall 1 \leq i,j \leq m, \quad  N_{ij}(1,\xi)=\sum^n_{k=1}\rho_{ik}M_{kj}(1,\xi) \label{eq_N2}
\end{multline}

while $d_{ij}^+, d_{ij}^-$ are given by 
\begin{align}
d_{ij}^+(x,,\xi)=&-\sum_{k=1}^mM_{ik}(x,\xi)\sigma_{kj}^{-+} \nonumber \\
&+\int_\xi^x\sum_{k=1}^mM_{ik}(x,s)d_{kj}^-(s,\xi)ds \\
d_{ij}^-(x,,\xi)=&-\sum_{k=1}^mN_{ik}(x,\xi)\sigma_{kj}^{-+} \nonumber \\
&+\int_\xi^x\sum_{k=1}^mN_{ik}(x,s)d_{kj}^-(s,\xi)ds
\end{align}
provided the $M$ and $N$ kernels are well-defined. Finally the observer gains are given by 
\begin{align}
p_{ij}^+(x)=\mu_jM_{ij}(x,0) \\
p_{ij}^-(x)=\mu_jN_{ij}(x,0) 
\end{align}
Considering the following alternate variables
\begin{align}
&\bar{M}_{ij}(\chi,y)=M_{ij}(1-y,1-\chi)=M_{ij}(x,\xi) \\
&\bar{N}_{ij}(\chi,y)=N_{ij}(1-y,1-\chi)=N_{ij}(x,\xi) \\
&\bar{\omega}_{ij}(\chi)=\omega_{ij}(x)
\end{align}
yields \\
\underline{ for $1\leq i \leq n$, $1 \leq j \leq m$}
\begin{multline}
-\lambda_i\partial_{\chi}\bar{M}_{ij}(\chi,y)+\mu_j\partial_{y}\bar{M}_{ij}(\chi,y)=-\sum^n_{k=1}\sigma_{ik}^{++}\bar{M}_{kj}(\chi,y) \\
-\sum^m_{p=1}\sigma^{-+}_{ip}\bar{N}_{pj}(\chi,y)+\sum_{p=1}^m\bar{M}_{ip}(\chi,y)\bar{\omega}_{pj}(\chi) \label{eq_M'}
\end{multline}

\underline{ for $1\leq i \leq m$, $1 \leq j \leq m$}
\begin{multline}
\mu_i\partial_{\chi}\bar{N}_{ij}(\chi,y)+\mu_j\partial_{y}\bar{N}_{ij}(\chi,y)=\sum^n_{k=1}\sigma_{ik}^{--}\bar{N}_{kj}(\chi,y) \\
\sum^n_{p=1}\sigma^{+-}_{ip}\bar{M}_{pj}(\chi,y)-\sum_{p=1}^{m}\bar{N}_{ip}(\chi,y)\bar{\omega}_{pj}(\chi) \label{eq_N'}
\end{multline}
with the following set of boundary conditions 
\begin{multline}
\forall 1 \leq i \leq m, \forall 1 \leq j \leq n, \quad  \bar{M}_{ij}(\chi,\chi)=-\dfrac{\sigma_{ij}^{+-}}{\mu_j+\lambda_i} = k_{ij} \label{eq_M1'}
\end{multline}
\begin{multline}
\forall 1 \leq i,j \leq m, j< i \quad \bar{N}_{ij}(\chi,\chi)=\dfrac{-\sigma_{ij}^{--}}{\mu_j-\mu_i} \label{eq_N1'}
\end{multline}
\begin{align}
\forall  i \leq j \quad \bar{\omega}_{ij}(\chi) = (\mu_j- \mu_i)\bar{N}_{ij}(\chi,\chi)+\sigma_{ij}^{--} \label{eq_Omega2'}
\end{align}
Evaluating \eqref{eq_M0}, \eqref{eq_N0} at $x=1$ yields
\begin{multline}
\forall 1 \leq i,j \leq m, \quad  \bar{N}_{ij}(\chi,0)=\sum^n_{k=1}\rho_{ik}M_{kj}(\chi,0) \label{eq_N2'}
\end{multline}
This system has the same cascade structure as the controller kernel system. Using a similar proof we can asses its well-posedness.

\subsection{Output feedback controller}
The estimates can be used in a observer-controller to derive an output feedback law yielding finite-time stability of the zero equilibrium
\begin{lemma}
Consider the system composed of \eqref{u_pde}-\eqref{IC} and target system \eqref{hat_u}-\eqref{hat_boundary} with the following control law 
\begin{align}
U(t)=\int_0^1[K(1,\xi)\hat{u}(t,\xi)+L(1,\xi)\hat{v}(t,\xi)]d\xi - R_1\hat{u}(t,1)
\end{align}
where $K$ and $L$ are defined by \eqref{eq_K}-\eqref{eq_Omega}. Its solutions ($u,v,\hat{u},\hat{v}$) converge in finite time to zero
\end{lemma}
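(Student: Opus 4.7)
The natural approach is the classical separation-principle-style argument: show that the observation error converges in finite time, and that once the estimates coincide with the true state the control law reduces to the certainty-equivalent full-state feedback from Theorem 2.

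First I would introduce the error variables $\tilde{u}=\hat{u}-u$ and $\tilde{v}=\hat{v}-v$. Subtracting \eqref{u_pde}--\eqref{IC} from \eqref{hat_u}--\eqref{hat_boundary} (and using that $\hat{u}(t,0)=Q_0v(t,0)=Q_0\hat{v}(t,0)+Q_0(v(t,0)-\hat{v}(t,0))$, and that the control input $U(t)$ appears identically in both the plant and the observer, so it cancels at $x=1$) yields precisely the error system \eqref{tilde_u}--\eqref{tilde_boundary}. This error system is independent of $(u,v)$ and is exactly the one for which the Volterra transformation \eqref{eq_M0}--\eqref{eq_N0} and the associated kernel equations \eqref{eq_M}--\eqref{eq_N2} were designed.

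Next, invoking the well-posedness of the observer kernels (established at the end of Section~VI using the same cascade argument as Theorem 1) together with the invertibility of the Volterra transformation (as in the proof of Theorem 2, via \cite{hochstadt2011integral}), one maps $(\tilde{u},\tilde{v})$ bijectively to $(\tilde{\alpha},\tilde{\beta})$. Applying Lemma 4 to the target system \eqref{tilde_a}--\eqref{tilde_boundary2} gives $(\tilde{\alpha},\tilde{\beta})\equiv 0$ on $[0,1]$ for all $t\geq t_F$, and by the bijection so does $(\tilde{u},\tilde{v})$. Hence the estimation error vanishes in finite time $t_F$.

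Finally, for $t\geq t_F$ one has $\hat{u}(t,\cdot)=u(t,\cdot)$ and $\hat{v}(t,\cdot)=v(t,\cdot)$, so the output feedback law reduces to
\[
U(t)=\int_0^1[K(1,\xi)u(t,\xi)+L(1,\xi)v(t,\xi)]\,d\xi-R_1u(t,1),
\]
which is exactly the full-state feedback of Theorem 2. Applying Theorem 2 to the plant restarted at time $t_F$ then yields $(u,v)\equiv 0$ for $t\geq 2 t_F$, and consequently $(\hat{u},\hat{v})\equiv 0$ as well. The whole quadruple therefore converges to zero in finite time (of order $2t_F$). The only delicate point in carrying this out rigorously is the first step: verifying that the interconnection of plant, observer, and controller is well-posed and that the error dynamics are autonomous; once that is written carefully, the rest follows from Lemma 4, Theorem 2, and the invertibility of the two Volterra transformations, so I do not anticipate a substantive obstacle.
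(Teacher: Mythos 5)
Your proposal is correct and follows essentially the same separation-principle argument as the paper: the observer error $(\tilde{u},\tilde{v})$ vanishes by time $t_F$ via the finite-time convergence of the observer target system (Lemma~5 in the paper's numbering, which is what you call Lemma~4) together with the invertibility of the Volterra transformation, after which the control law is the certainty-equivalent full-state feedback and Theorem~2 yields convergence of everything by $2t_F$. The paper's only (immaterial) difference is that it applies Theorem~2 to the observer states $(\hat{u},\hat{v})$ once $\hat{v}(t,0)=v(t,0)$, rather than to the plant $(u,v)$ as you do; your version of the first step, deriving the autonomous error dynamics explicitly, is actually spelled out more carefully than in the paper.
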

\begin{proof}
The convergence of the observer error states $\tilde{u}, \tilde{v}$  to zero for $t_F \leq t$ is ensured by Lemma 5, along with the existence of the backstepping transformation. Thus, once $t_F \leq t$, $v(t,0) = \hat{v}(t,0)$ and one can use Theorem2. Therefore for $2t_F \leq t$, one has ($\tilde{u}, \tilde{v}, \hat{u}, \hat{v}$) $\equiv 0$ which yields ($u,v$) $\equiv 0$.
The convergence of the observer error states $\tilde{u}, \tilde{v}$  to zero for $t_F \leq t$ is ensured by Lemma 5, along with the existence of the backstepping transformation. Thus, once $t_F \leq t$, $v(t,0) = \hat{v}(t,0)$ and one can use Theorem2. Therefore for $2t_F \leq t$, one has ($\tilde{u}, \tilde{v}, \hat{u}, \hat{v}$) $\equiv 0$ which yields ($u,v$) $\equiv 0$.
\end{proof}

\section{Simulation results}

In this section we illusttrate our results with simulations on a toy problem. The numerical values of the parameters are as follow. 
\begin{align}
 n=m=2, \quad \mu_1=\lambda_1=1, \quad \mu_2=\lambda_2=2
\end{align}
\begin{align}
\Sigma^{++}&=\Sigma^{+-}=\begin{pmatrix} 1 &0\\ 0 &1 \end{pmatrix} \quad \Sigma^{-+}=\begin{pmatrix} 1 &1\\ 1 &0 \end{pmatrix} \\
\Sigma^{--}&=\begin{pmatrix} 0 &1\\ 1 &0 \end{pmatrix} \quad
Q_0=\begin{pmatrix} 1 &0\\ 0 &0 \end{pmatrix} \quad R_1=0
\end{align}
Figure $1$ pictures the $\mathcal{L}^2-$norm of the state $(u,v)$ in open loop, using the control law presented in \cite{hu2015control} and then using the control law \eqref{eq_U} presented in this paper. While the system in open loop is unstable (the $\mathcal{L}^2-norm$ diverges) it converges in minimum time $t_F=\dfrac{1}{\lambda_1}+\dfrac{1}{\mu_1}=2$ when controller \eqref{eq_U} is applied as expected from Theorem $2$. The controller presented in \cite{hu2015control} converges in a larger time which is equal (as mentioned in \cite{hu2015control}) to $\dfrac{1}{\lambda_1}+\dfrac{1}{\mu_1}+\dfrac{1}{\mu_2}=2.5$.
\begin{figure}[htb]
		\includegraphics[width=\columnwidth]{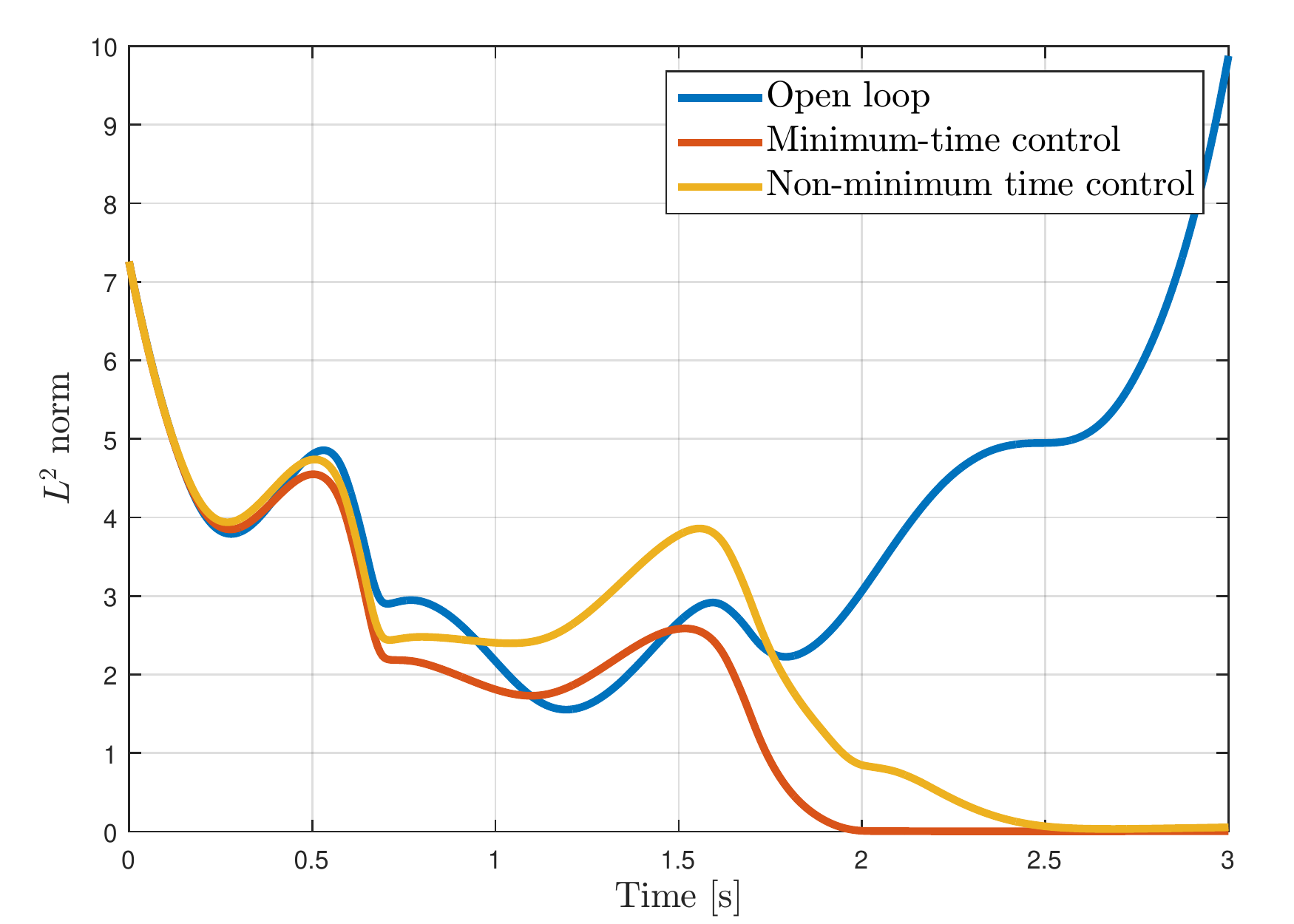}
	\caption{Time evolution of the L2-norm in open loop and using two different controlers}
	\label{fig:Simulation}
\end{figure}
\section{Concluding remarks}

Using the backstepping approach we have presented a stabilizating boundary feedback law for a general class of linear first-order system. Moreover, contrary to \cite{hu2015control}, the zero-equilibrium of the system is reached in minimum time $t_F$. 

The presented design raises several important questions that will be the topic of future investigation. In \cite{hu2015control}, the proposed control law does not yield minimum time convergence, but features several degrees of freedom that may be useable to handle transients. A comparison of the transient responses of both designs, as well as their comparative robustness, should be performed.

Besides, the presented result narrows the gap with the theoretical controllability results of \cite{li2010strong}. These results, although they do not provide explicit control law, ensure exact minimum-time controllability with less control inputs than what is currently achievable using backstepping. More generally, this raises the question of the links between stabilizability and stabilizability by backstepping.

\bibliographystyle{amsplain}
\bibliography{Biblio}

\providecommand{\bysame}{\leavevmode\hbox to3em{\hrulefill}\thinspace}
\providecommand{\MR}{\relax\ifhmode\unskip\space\fi MR }
\providecommand{\MRhref}[2]{%
  \href{http://www.ams.org/mathscinet-getitem?mr=#1}{#2}
}
\providecommand{\href}[2]{#2}
\begin{thebibliography}{10}

\bibitem{amin2008stability}
Saurabh Amin, Falk~M Hante, and Alexandre~M Bayen, \emph{On stability of
  switched linear hyperbolic conservation laws with reflecting boundaries},
  Hybrid Systems: Computation and Control, Springer, 2008, pp.~602--605.

\bibitem{buenaventura2012dynamic}
Felipe~Castillo Buenaventura, Emmanuel Witrant, Christophe Prieur, and Luc
  Dugard, \emph{Dynamic boundary stabilization of hyperbolic systems}, 51st
  IEEE Conference on Decision and Control (CDC 2012), 2012, pp.~n--c.

\bibitem{coron2009control}
Jean-Michel Coron, \emph{Control and nonlinearity}, no. 136, American
  Mathematical Soc., 2009.

\bibitem{coron2008dissipative}
Jean-Michel Coron, Georges Bastin, and Brigitte d'Andr{\'e}a Novel,
  \emph{Dissipative boundary conditions for one-dimensional nonlinear
  hyperbolic systems}, SIAM Journal on Control and Optimization \textbf{47}
  (2008), no.~3, 1460--1498.

\bibitem{coron1999lyapunov}
Jean-Michel Coron, Brigitte dÕAndr{\'e}a Novel, and Georges Bastin, \emph{A
  lyapunov approach to control irrigation canals modeled by saint-venant
  equations}, Proc. European Control Conference, Karlsruhe, 1999.

\bibitem{coron2013local}
Jean-Michel Coron, Rafael Vazquez, Miroslav Krstic, and Georges Bastin,
  \emph{Local exponential h\^{}2 stabilization of a 2$\backslash$times2
  quasilinear hyperbolic system using backstepping}, SIAM Journal on Control
  and Optimization \textbf{51} (2013), no.~3, 2005--2035.

\bibitem{de2003boundary}
Jonathan de~Halleux, Christophe Prieur, J-M Coron, Brigitte d'Andr{\'e}a Novel,
  and Georges Bastin, \emph{Boundary feedback control in networks of open
  channels}, Automatica \textbf{39} (2003), no.~8, 1365--1376.

\bibitem{di2011dynamics}
Florent Di~Meglio, \emph{Dynamics and control of slugging in oil production},
  Ph.D. thesis, {\'E}cole Nationale Sup{\'e}rieure des Mines de Paris, 2011.

\bibitem{di2013stabilization}
Florent Di~Meglio, Rafael Vazquez, and Miroslav Krstic, \emph{Stabilization of
  a system of coupled first-order hyperbolic linear pdes with a single boundary
  input}, Automatic Control, IEEE Transactions on \textbf{58} (2013), no.~12,
  3097--3111.

\bibitem{djordjevic2010boundary}
S~Djordjevic, OH~Bosgra, PMJ Van~den Hof, and Dimitri Jeltsema, \emph{Boundary
  actuation structure of linearized two-phase flow}, American Control
  Conference (ACC), 2010, IEEE, 2010, pp.~3759--3764.

\bibitem{dudret2012stability}
St{\'e}phane Dudret, Karine Beauchard, Fouad Ammouri, and Pierre Rouchon,
  \emph{Stability and asymptotic observers of binary distillation processes
  described by nonlinear convection/diffusion models}, American Control
  Conference (ACC), 2012, IEEE, 2012, pp.~3352--3358.

\bibitem{greenberg1984effect}
James~M Greenberg and Li~Ta Tsien, \emph{The effect of boundary damping for the
  quasilinear wave equation}, Journal of Differential Equations \textbf{52}
  (1984), no.~1, 66--75.

\bibitem{hochstadt2011integral}
Harry Hochstadt, \emph{Integral equations}, vol.~91, John Wiley \& Sons, 2011.

\bibitem{hu2015control}
Long Hu, Florent Di~Meglio, Rafael Vazquez, and Miroslav Krstic, \emph{Control
  of homodirectional and general heterodirectional linear coupled hyperbolic
  pdes}, arXiv preprint arXiv:1504.07491 (2015).

\bibitem{john1960continuous}
Fritz John, \emph{Continuous dependence on data for solutions of partial
  differential equations with a prescribed bound}, Communications on pure and
  applied mathematics \textbf{13} (1960), no.~4, 551--585.

\bibitem{li1994global}
Daqian Li, \emph{Global classical solutions for quasilinear hyperbolic
  systems}, vol.~32, John Wiley \& Sons, 1994.

\bibitem{li2010strong}
Tatsien Li and Bopeng Rao, \emph{Strong (weak) exact controllability and strong
  (weak) exact observability for quasilinear hyperbolic systems}, Chinese
  Annals of Mathematics, Series B \textbf{31} (2010), no.~5, 723--742.

\bibitem{prieur2012iss}
Christophe Prieur and Fr{\'e}d{\'e}ric Mazenc, \emph{Iss-lyapunov functions for
  time-varying hyperbolic systems of balance laws}, Mathematics of Control,
  Signals, and Systems \textbf{24} (2012), no.~1-2, 111--134.

\bibitem{prieur2008robust}
Christophe Prieur, Joseph Winkin, and Georges Bastin, \emph{Robust boundary
  control of systems of conservation laws}, Mathematics of Control, Signals,
  and Systems \textbf{20} (2008), no.~2, 173--197.

\bibitem{qin1985global}
Tie~Hu Qin, \emph{Global smooth solutions of dissipative boundary-value
  problems for 1st order quasilinear hyperbolic systems}, CHINESE ANNALS OF
  MATHEMATICS SERIES B \textbf{6} (1985), no.~3, 289--298.

\bibitem{santos2008boundary}
Val{\'e}rie~Dos Santos and Christophe Prieur, \emph{Boundary control of open
  channels with numerical and experimental validations}, Control Systems
  Technology, IEEE Transactions on \textbf{16} (2008), no.~6, 1252--1264.

\bibitem{vazquez2011local}
Rafael Vazquez, Jean-Michel Coron, Miroslav Krstic, and Georges Bastin,
  \emph{Local exponential h 2 stabilization of a 2$\times$ 2 quasilinear
  hyperbolic system using backstepping}, Decision and Control and European
  Control Conference (CDC-ECC), 2011 50th IEEE Conference on, IEEE, 2011,
  pp.~1329--1334.

\bibitem{vazquez2011backstepping}
Rafael Vazquez, Miroslav Krstic, and Jean-Michel Coron, \emph{Backstepping
  boundary stabilization and state estimation of a 2$\times$ 2 linear
  hyperbolic system}, Decision and Control and European Control Conference
  (CDC-ECC), 2011 50th IEEE Conference on, IEEE, 2011, pp.~4937--4942.

\bibitem{whitham2011linear}
Gerald~Beresford Whitham, \emph{Linear and nonlinear waves}, vol.~42, John
  Wiley \& Sons, 2011.

\bibitem{woittennek2009flatness}
Frank Woittennek, Joachim Rudolph, and Torsten Kn{\"u}ppel, \emph{Flatness
  based trajectory planning for a semi-linear hyperbolic system of first order
  pde modeling a tubular reactor}, PAMM \textbf{9} (2009), no.~1, 3--6.

\bibitem{xu2002exponential}
Cheng-Zhong Xu and Gauthier Sallet, \emph{Exponential stability and transfer
  functions of processes governed by symmetric hyperbolic systems}, ESAIM:
  Control, Optimisation and Calculus of Variations \textbf{7} (2002), 421--442.

\end{thebibliography}

\end{document}